\newcommand{\cl}[1]{\mathscr{#1}}
\newcommand{\bl}[1]{\mathbb{#1}}
\newcommand{\bd}[1]{\bf{#1}}
\def\@maketitle{%
  \newpage
  \null
  \vskip 2em%
  \begin{center}%
  \let \footnote \thanks
    {\Large\bfseries \@title \par}%
    \vskip 1.5em%
    {\normalsize
      \lineskip .5em%
      \begin{tabular}[t]{c}%
        \@author
      \end{tabular}\par}%
    \vskip 1em%
    {\normalsize \@date}%
  \end{center}%
  \par
  \vskip 1.5em}
\newtheorem{theorem}{Theorem}
\newtheorem{corollary}[theorem]{Corollary}
 \newtheoremstyle{component}{}{}{}{}{\bfseries\itshape}{:}{.5em}{\thmnote{#3}#1}
    \theoremstyle{component}
\patchcmd{\@algocf@start}{%
  \begin{lrbox}{\algocf@algobox}%
}{%
  \rule{0.1\textwidth}{\z@}%
  \begin{lrbox}{\algocf@algobox}%
  \begin{minipage}{0.8\textwidth}%
}{}{}
\patchcmd{\@algocf@finish}{%
  \end{lrbox}%
}{%
  \end{minipage}%
  \end{lrbox}%
}{}{}
\newenvironment{mathprog}[1][]%
{\par\bigskip\noindent\ignorespaces\begin{tabular*}{\textwidth}{@{} p{0.35\textwidth} @{\hspace{0.5em}} l} #1}%
{\end{tabular*}\\\smallskip\par\noindent\ignorespacesafterend}
\newcommand{\progline}[2][]{\hfill #1 & \begin{math}\displaystyle #2 \end{math}\\}
\title{\LARGE Average value of solutions for the bipartite boolean quadratic programs and rounding algorithms\footnote{This research work was supported by an NSERC  Discovery grant and an NSERC discovery accelerator grant awarded to Abraham P Punnen. }}
\author {Abraham P. Punnen\thanks{apunnen@sfu.ca}}
\author {Piyashat Sripratak\thanks{psriprat@sfu.ca}}
\author {Daniel Karapetyan\thanks{daniel.karapetyan@gmail.com}}
\affil{Department of Mathematics, Simon Fraser University Surrey, 250-13450 102nd AV, Surrey, British Columbia, V3T 0A3, Canada}
\date{}
\begin{document}

\maketitle

\begin{abstract}
We consider domination analysis of approximation algorithms for the bipartite boolean quadratic programming problem (BBQP) with $m+n$ variables.  A closed form formula is developed to compute the average objective function value $\cl{A}$ of all solutions in $O(mn)$ time. However, computing the median objective function value of the solutions is shown to be NP-hard. Also, we show that any solution with objective function value no worse than $\cl{A}$  dominates at least $2^{m+n-2}$ solutions and this bound is the best possible. Further, we show that such a solution can be identified in $O(mn)$ time and hence the dominance ratio of this algorithm is at least $\frac{1}{4}$. We then show that for any fixed rational number $\alpha > 1$,  no polynomial time approximation algorithm exists for BBQP with dominance ratio larger than $1-2^{\frac{(1-\alpha)}{\alpha}(m+n)}$, unless P=NP.  We then analyze some powerful local search algorithms and show that they can get trapped at a local maximum with objective function value less than $\cl{A}$. One of our approximation algorithms has an interesting rounding property  which provides a data dependent lower bound on the optimal objective function value. A new integer programming formulation of BBQP is also given and  computational results with our rounding algorithms are reported.\\

\noindent
\textbf{Keywords:} quadratic programming, boolean variables, heuristics, worst-case analysis, domination analysis.
\end{abstract}

\section{Introduction}
The {\it   bipartite boolean quadratic programming problem} (BBQP) is to
\begin{mathprog}
\progline[Maximize]{f(x,y) = x^TQy + cx+dy}
\progline[subject to]{x \in \{0,1\}^m, y\in \{0,1\}^n}
\end{mathprog}
where $Q = (q_{ij})$ is an $m\times n$ matrix, $c=(c_1,c_2, \ldots ,c_m)$ is a row vector in $R^m$, and $d=(d_1,d_2,\ldots ,d_n)$ is a row vector in $R^n$\@. Without loss of generality, we assume that $m \leq n$.

BBQP has applications in data mining, clustering and bioinformatics~\cite{tanay}, approximating a matrix by a rank-one binary matrix~\cite{gills,shen}, mining discrete patterns in binary data~\cite{lu,shen}, solving fundamental graph theoretic optimization problems such as maximum weight biclique~\cite{amb,tan}, maximum weight cut problem on a bipartite graph~\cite{p1},  maximum weight induced subgraph of a bipartite graph~\cite{p1}, and computing approximations to the cut-norm of a matrix~\cite{alon}.

BBQP is closely related to the well-studied {\it boolean quadratic programming problem} (BQP)~\cite{b1,gl3,w1}:
\begin{mathprog}
\progline[Maximize]{ f(x) = x^TQ^{\prime}x + c^{\prime}x}
\progline[subject to]{ x \in \{0,1\}^n,}
\end{mathprog}
where $Q^{\prime}$ is an $n\times n$ matrix and $c{^{\prime}}$ is a row vector in $R^n$.
BBQP can be formulated as a BQP with $n+m$ variables~\cite{p1} and hence the resulting cost matrix will have dimension $(n+m)\times (n+m)$. This increase in problem size is not desirable especially for large scale problems. On the other hand, we can formulate BQP  as a
BBQP by choosing
 \begin{equation}\label{equa}Q=Q^{\prime}+2MI,\; c=\frac{1}{2}c^{\prime}-Me \mbox{ and } d=\frac{1}{2}c^{\prime}-Me,\end{equation}
 where $I$ is an $n\times n$ identity matrix, $e\in R^n$ is an all one row vector and $M$ is a very large number~\cite{p1}. Thus, BBQP is a proper generalization of BQP which makes the study of BBQP further interesting. An instance of BBQP is completely defined by the matrix $Q$ and vectors $c$ and $d$ and hence it is represented by $\cl{P}(Q,c,d)$. Thus, $\cl{P}(Q,0,0)$ represents a BBQP with no terms  $cx$ or $dy$ in the objective function. Such an instance is referred to as a {\it homogeneous} BBQP. Relationships between BBQP and its homogeneous version are considered in~\cite{p1}.

BBQP is trivial if the entries of $Q$,$c$ and $d$ are either all positive or all negative.  BBQP is known to be NP-hard~\cite{p2} since the maximum weight biclique problem (MWBP) is a special case of it. Approximation hardness results for MWBP are established by Ambuhl et al.~\cite{amb} and Tan~\cite{tan}. Performance ratio for approximation algorithms for some special cases of BBQP are discussed by Alon and Naor~\cite{alon} and Raghavendra and Steurer~\cite{rs}. Results extensive experimental analysis of algorithms for BBQP are reported by Karapetyan and Punnen~\cite{kp} and Glover et al.~\cite{gl4}.  Punnen, Sripratak, and Karapetyan~\cite{p1} studied BBQP and identified various polynomially solvable special cases. Various classes of valid inequalities and facet defining inequalities for the polytope associated with BBQP are obtained by Sripratak and Punnen~\cite{sp}.

Worst case analysis of approximation algorithms (heuristics) are carried out normally through the measure of performance ratio~\cite{vv}. Other important measures include differential ratio~\cite{dm}, dominance ratio~\cite{gp,h1}, dominance number~\cite{z1,gp}, comparison to average value of solutions~\cite{an1,t1,sn3,sd2,rb1} etc. Our focus in this paper is on domination analysis and average value based analysis of approximation algorithms for BBQP. Berend et al.~\cite{b1} eloquently argues the importance of domination analysis in the study of approximation algorithms.

 Let $\cl{F}$ be the family of all solutions of BBQP and it is easy to see that $|\cl{F}|=2^{m+n}$\@.  The average objective function value $\cl{A}(Q,c,d)$ of all the solutions  of BBQP  is given by $\cl{A}(Q,c,d)=2^{-(m+n)}\sum_{(x,y)\in \cl{F}}f(x,y)$. The idea of comparing a heuristic solution to the average objective function value of all the solutions as a measure of heuristic quality for combinatorial optimization problems was originated in the Russian literature in the early 1970s. Most of these studies are focussed on the traveling salesman problem and the assignment problem (e.g. Rublineckii~\cite{rb1}, Minina and Perekrest~\cite{mn1}, Vizing~\cite{v1}, Sarvanov and Doroshko~\cite{sn1,sd2}). In the western literature,  Gutin and Yeo~\cite{gutin2}, Grover~\cite{gr1}, Punnen et al.~\cite{p2}, Punnen and Kabadi~\cite{p3}, Deneko and Woeginger~\cite{x1} studied the traveling salesman problem and identified heuristics that guarantee a solution with objective function value no worse than the average value of all tours. Such a solution has interesting domination properties and hence the approach is also relevant in dominance analysis of heuristics. For recent developments on domination analysis, we refer to the excellent research papers~\cite{a1,b1,gp,gutin3}. Gutin and Yeo~\cite{gutin2}, Sarvanov~\cite{sn3}, and Angel et al.~\cite{an1} studied heuristics for the quadratic assignment problem with performance guarantee in terms of average value of solutions. Similar analysis for the three-dimensional assignment problem was considered by Sarvanov~\cite{sn3}, for the Maximum clique problem by Bendall and Margot~\cite{ben2}, and for the satisfiability problem by Twitto~\cite{t1}. Berend et al.~\cite{b1} considered dominance analysis by including infeasible solutions. Other problems studied from the point of view of dominance analysis and average value based analysis include graph bipartition,  variations of maximum clique and independent set problems~\cite{an1,gr1} and the subset-sum problem~\cite{b1}. For information on dominance results and linkages with the development of heuristic algorithms based on very large scale neighborhood search, we refer to~\cite{a14}.

A solution with objective function value no worse than the average value of a solution with high probability can be  obtained by repeated random sampling. However, it should be pointed out that even algorithms that performs well in practice could produce solutions with objective function value inferior to the average value of a solution. We observed this particularly in the case of the BBQP. Thus, a worst case performance  of a heuristic that guarantees a solution with objective function value no worse than the average value of a solution is a useful measure to be included when studying worst case behavior of heuristic algorithms for combinatorial optimization problems.

Let $(x,y),(x^0,y^0)\in \cl{F}$. Then $(x^0,y^0)$ \emph{dominates} $(x,y)$ if $f(x,y)\leq f(x^0,y^0)$. Let $\Gamma$ be a heuristic algorithm for BBQP that produces a solution $(x^{\Gamma},y^{\Gamma})$. Define $\cl{G}^{\Gamma}=\{(x,y)\in \cl{F} : f(x,y)\leq f(x^{\Gamma},y^{\Gamma})\}$. Let $I$ be the collection of all instances of BBQP. Then the \emph{dominance number} and \emph{dominance ratio} of $\Gamma$ are defined respectively as
$$ \inf_{\cl{P}(Q,c,d)\in I}\left|\cl{G}^{\Gamma}\right|\mbox{~ and } \inf_{\cl{P}(Q,c,d)\in I}\left\{\frac{\left |\cl{G}^{\Gamma}\right |}{\left|\cl{F}\right|}\right\}.$$ The concept of dominance ratio in the analysis of heuristics was proposed by Glover and Punnen~\cite{gp}. Prior to this work, Zemel~\cite{z1} considered different measures to analyze heuristic algorithms one of which is equivalent to the dominance number. Hassin and Kuller~\cite{h1} also considered similar measures in analyzing heuristic algorithms.

In this paper we obtain a closed form formula to compute $\cl{A}(Q,c,d)$ in $O(mn)$ time. We also show that any solution to BBQP with objective function value no less than $\cl{A}(Q,c,d)$ dominates $2^{m+n-2}$ solutions. Such a solution is called \emph{no worse than average} solution.  Two algorithms of complexity $O(mn)$ are developed to compute no-worse than average solutions.  Thus, the dominance ratio of these algorithms is at least $\frac{1}{4}$. One of these algorithms have interesting rounding property which provides data dependent lower bounds. The problem of computing a solution with objective function value no worse than the median of the objective function values of all solutions is shown to be NP-hard. Further, we show that, unless P=NP, for any fixed rational number $\alpha > 1$,  no polynomial time approximation algorithm exists for BBQP with dominance ratio larger than $1-2^{\frac{(1-\alpha)}{\alpha}(m+n)}$. We also analyze some very powerful local search algorithms and show that, in worst case, such algorithms could get trapped at a locally optimal solution with objective function value less than $\cl{A}(Q,c,d)$. Finally we provide a new integer programming formulation of BBQP and the resulting LP relaxation solution could be used to initiate our rounding algorithms. Computational results are also provided using the rounding algorithms which establish that the algorithms are good candidates to obtain very fast starting solutions in complex metaheuristic algorithms.

Through out this paper, we use the following notations and naming conventions. We denote $\cl{M}=\{1,2,\ldots ,m\}$ and $\cl{N}=\{1,2,\ldots ,n\}$. The $i$th component of a vector is represented simply by adding the subscript $i$ to the name of the vector. For example, the $i$th component of the vector $x^*$ is $x^*_i$. The set $\{0,1\}^n$ is denoted by $\bl{B}^n$ and $[0,1]^n$ is denoted by $\bl{U}^n$ for any positive integer $n$. For any positive integer $m$, an $m$-vector of all 1's is denoted by ${\bd{1}}^m$  and an $m$-vector of all 0's is denoted by ${\bd{0}}^m$.

\section{Average value of solutions and dominance properties}

Note that there are $2^m$ candidate solutions for $x$ and $2^n$ candidate solutions for $y$. Then the solutions in the family $\cl{F}$ can be enumerated as  $\cl{F}=\{(x^k,y^{\ell}) : k=1,2,\ldots ,2^m, \ell=1,2,\ldots ,2^n\}$.  The next theorem gives a closed form expression to compute $\cl{A}(Q,c,d)$ in $O(mn)$ time.
\begin{theorem}$\label{th1}{\displaystyle \cl{A}(Q,c,d)=\dfrac{1}{4}\sum_{i\in \cl{M}}\sum_{j\in \cl{N}}q_{ij}+\dfrac{1}{2}\sum_{i\in \cl{M}}c_i+\dfrac{1}{2}\sum_{j\in \cl{N}}d_j}$.\end{theorem}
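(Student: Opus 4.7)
The plan is to evaluate $\sum_{(x,y)\in\cl{F}} f(x,y)$ term by term and then divide by $|\cl{F}|=2^{m+n}$. Expanding the objective,
$$\sum_{(x,y)\in\cl{F}} f(x,y) \;=\; \sum_{i\in\cl{M}}\sum_{j\in\cl{N}} q_{ij}\!\!\sum_{(x,y)\in\cl{F}}\!\! x_i y_j \;+\; \sum_{i\in\cl{M}} c_i\!\!\sum_{(x,y)\in\cl{F}}\!\! x_i \;+\; \sum_{j\in\cl{N}} d_j\!\!\sum_{(x,y)\in\cl{F}}\!\! y_j,$$
so it suffices to count, for fixed indices, how many solutions $(x,y)\in\{0,1\}^m\times\{0,1\}^n$ activate each monomial.

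Next I would do the elementary counting. For a fixed $i\in\cl{M}$ and $j\in\cl{N}$, the product $x_i y_j$ equals $1$ exactly when $x_i=1$ and $y_j=1$; the remaining $m-1$ coordinates of $x$ and $n-1$ coordinates of $y$ are free, giving $2^{m+n-2}$ such solutions. For a fixed $i\in\cl{M}$, $x_i=1$ in exactly $2^{m+n-1}$ solutions, and similarly $y_j=1$ in exactly $2^{m+n-1}$ solutions. Substituting these counts yields
$$\sum_{(x,y)\in\cl{F}} f(x,y) \;=\; 2^{m+n-2}\!\sum_{i\in\cl{M}}\sum_{j\in\cl{N}} q_{ij} \;+\; 2^{m+n-1}\!\sum_{i\in\cl{M}} c_i \;+\; 2^{m+n-1}\!\sum_{j\in\cl{N}} d_j.$$

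Dividing both sides by $2^{m+n}$ gives the claimed closed form for $\cl{A}(Q,c,d)$. The right-hand side has $mn+m+n$ terms and is computable in $O(mn)$ arithmetic operations, matching the complexity asserted in the statement.

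There is no real obstacle here; the only thing to be careful about is keeping the exponents straight ($m+n-2$ for the quadratic monomials versus $m+n-1$ for the linear ones). Equivalently, and perhaps more transparently, one can interpret the average as an expectation under the uniform distribution on $\cl{F}$, where the coordinates of $x$ and $y$ are mutually independent Bernoulli$(1/2)$ random variables; linearity of expectation then gives $\mathbb{E}[x_i y_j]=1/4$ and $\mathbb{E}[x_i]=\mathbb{E}[y_j]=1/2$, reproducing the formula immediately.
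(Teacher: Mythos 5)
Your proof is correct and follows essentially the same route as the paper: expand $f$, interchange the order of summation, and count that each quadratic monomial is active in $2^{m+n-2}$ solutions and each linear term in $2^{m+n-1}$, then divide by $2^{m+n}$. The closing expectation/Bernoulli remark is just a repackaging of the same counting, so there is nothing to add.
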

\begin{proof}
Let $\eta=2^m$ and $\nu=2^n$. Then
\begin{align*}
\cl{A}(Q,c,d)&=\dfrac{1}{2^{m+n}}\sum_{k=1}^{\eta}\sum_{\ell=1}^{\nu}f(x^k,y^{\ell})\\
&=\dfrac{1}{2^{m+n}}\sum_{k=1}^{\eta}\sum_{\ell=1}^{\nu}\left(\sum_{i\in \cl{M}}\sum_{j\in \cl{N}}q_{ij}x^k_iy^{\ell}_j +\sum_{i\in \cl{M}}c_ix^k_i+\sum_{j\in \cl{N}}d_jy^{\ell}_j\right )\\
&=\dfrac{1}{2^{m+n}}\left(\sum_{i\in \cl{M}}\sum_{j\in \cl{N}}q_{ij}\sum_{k=1}^{\eta}x^k_i\sum_{\ell=1}^{\nu}y^{\ell}_j +\sum_{i\in \cl{M}}c_i\sum_{\ell=1}^{\nu}\sum_{k=1}^{\eta}x^k_i+\sum_{j\in \cl{N}}d_j \sum_{k=1}^{\eta}\sum_{\ell=1}^{\nu}y^{\ell}_j\right )\\
&=\dfrac{1}{2^{m+n}}\left(2^{m-1}2^{n-1}\sum_{i\in \cl{M}}\sum_{j\in \cl{N}}q_{ij} +\nu 2^{m-1}\sum_{i\in \cl{M}}c_i+\eta 2^{n-1}\sum_{j\in \cl{N}}d_j\right )\\
&=\dfrac{1}{4}\sum_{i\in \cl{M}}\sum_{j\in \cl{N}}q_{ij}+\dfrac{1}{2}\sum_{i\in \cl{M}}c_i+\dfrac{1}{2}\sum_{j\in \cl{N}}d_j.
\end{align*}
\end{proof}
If either $x=0$ or $y=0$, then $x^TQy=0$ and such an  $(x,y)$ is  called a {\it trivial solution}. Note that $f(x,y)$ need not be equal to zero for trivial solutions. All  remaining solutions are called {\it nontrivial solutions}. Maximizing $f(x,y)$ over trivial solutions is straightforward and thus one can restrict attention to non-trivial solutions only. The number of nontrivial solutions is $(2^m-1)(2^n-1)$. Let $\bar{A}(Q,c,d)$ denote the average value of all nontrivial solutions for $\cl{P}(Q,c,d)$.
\begin{corollary}\label{cor1}
${\displaystyle \bar{A}(Q,c,d)=\dfrac{2^{m-1}2^{n-1}}{(2^{m}-1)(2^n-1)}\left(\sum_{i\in \cl{M}}\sum_{j\in \cl{N}}q_{ij} +\dfrac{2^n-1}{ 2^{n-1}}\sum_{i\in \cl{M}}c_i+\dfrac{2^m-1}{ 2^{m-1}}\sum_{j\in \cl{N}}d_j\right )}$. Further, $\bar{A}(Q,{\bd{0}}^m,{\bd{0}}^n)=\dfrac{2^{m+n}}{(2^{m}-1)(2^n-1)}\cl{A}(Q,{\bd{0}}^m,{\bd{0}}^n)$ and ${\displaystyle \lim_{n\rightarrow \infty}\lim_{m\rightarrow \infty}\bar{A}(Q,c,d)=\cl{A}(Q,c,d)}$.
\end{corollary}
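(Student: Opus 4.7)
The plan is to derive $\bar{A}(Q,c,d)$ from Theorem~\ref{th1} by a straightforward inclusion--exclusion style accounting that removes the contributions of trivial solutions from the total sum. First I would observe that Theorem~\ref{th1} implicitly gives the total $S := \sum_{(x,y)\in\cl{F}} f(x,y) = 2^{m+n}\cl{A}(Q,c,d)$. To isolate $\bar{A}$, I would subtract the sum of $f$ over trivial solutions and then divide by the number $(2^m-1)(2^n-1)$ of nontrivial solutions.

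Next I would compute the trivial contribution. For $x=\mathbf{0}^m$ the summand collapses to $f(\mathbf{0}^m,y)=dy$, which summed over $y\in\bl{B}^n$ gives $2^{n-1}\sum_{j\in\cl{N}} d_j$ because each coordinate of $y$ is $1$ in exactly half of the $2^n$ choices. Symmetrically, fixing $y=\mathbf{0}^n$ contributes $2^{m-1}\sum_{i\in\cl{M}}c_i$. The overlap case $x=\mathbf{0}^m,y=\mathbf{0}^n$ contributes $f(\mathbf{0}^m,\mathbf{0}^n)=0$, so no correction for double counting is needed. Substituting the closed form for $\cl{A}(Q,c,d)$ from Theorem~\ref{th1} and simplifying (regrouping the $c_i$ terms as $2^{m+n-1}-2^{m-1}=2^{m-1}(2^n-1)$ and similarly for the $d_j$ terms) produces exactly the stated expression after factoring $2^{m-1}2^{n-1}$ out front.

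The two remaining claims then follow as short corollaries of the formula. Setting $c=\mathbf{0}^m$ and $d=\mathbf{0}^n$ leaves only the quadratic term, and rewriting $\frac{2^{m-1}2^{n-1}}{(2^m-1)(2^n-1)}\sum q_{ij}$ as $\frac{2^{m+n}}{(2^m-1)(2^n-1)}\cdot\frac{1}{4}\sum q_{ij}$ matches $\frac{2^{m+n}}{(2^m-1)(2^n-1)}\cl{A}(Q,\mathbf{0}^m,\mathbf{0}^n)$. For the limit statement, I would note that $\frac{2^m-1}{2^{m-1}}\to 2$ as $m\to\infty$, $\frac{2^n-1}{2^{n-1}}\to 2$ as $n\to\infty$, and the prefactor $\frac{2^{m-1}2^{n-1}}{(2^m-1)(2^n-1)}\to\frac{1}{4}$, so the bracketed expression converges termwise to $\sum q_{ij}+2\sum c_i+2\sum d_j$ and the whole expression converges to $\cl{A}(Q,c,d)$ by Theorem~\ref{th1}.

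There is no real obstacle here; the only thing to be careful about is the algebraic regrouping so that the coefficients of $\sum c_i$ and $\sum d_j$ come out as $2^{m-1}(2^n-1)$ and $2^{n-1}(2^m-1)$, respectively, which is what permits the clean factorization of $2^{m-1}2^{n-1}$. The limit part is essentially bookkeeping on three elementary geometric limits.
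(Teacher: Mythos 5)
Your proof is correct and is exactly the derivation the paper leaves implicit (the corollary is stated without proof): subtract the trivial-solution contributions $2^{m-1}\sum_{i\in \cl{M}} c_i$ and $2^{n-1}\sum_{j\in \cl{N}} d_j$ from $2^{m+n}\cl{A}(Q,c,d)$, divide by $(2^m-1)(2^n-1)$, and the regrouping $2^{m+n-1}-2^{m-1}=2^{m-1}(2^n-1)$ yields the stated factorization, with the two remaining claims following by specialization and elementary limits. No gaps.
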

From Corollary~\ref{cor1}, the asymptotic behavior of $\bar{A}(Q,c,d)$ and $\cl{A}(Q,c,d)$ are the same. Thus, hereafter we focus our attention on $\cl{A}(Q,c,d)$ only.

Let $\cl{G}=\{(x,y) : x\in \{0,1\}^m, y\in \{0,1\}^n, f(x,y) \leq \cl{A}(Q,c,d)\}$. Thus, $\cl{G}$ consists of all solutions of BBQP that are no worse than average.

\begin{theorem}\label{dth1} $|\cl{G}|\geq 2^{m+n-2} $.\end{theorem}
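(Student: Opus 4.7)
The plan is to exhibit an involution-style partition of $\cl{F}$ into groups of four solutions whose average objective value is exactly $\cl{A}(Q,c,d)$; then at least one member of each group lies in $\cl{G}$, giving the bound.

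Concretely, for any $(x,y) \in \cl{F}$ write $\bar{x} = \mathbf{1}^m - x$ and $\bar{y} = \mathbf{1}^n - y$, and consider the quadruple
\[
O(x,y) = \bigl\{(x,y),\, (\bar{x},y),\, (x,\bar{y}),\, (\bar{x},\bar{y})\bigr\}.
\]
Since $m,n \geq 1$, all four members are distinct, and the orbits $O(x,y)$ partition $\cl{F}$ into exactly $2^{m+n}/4 = 2^{m+n-2}$ blocks. The central calculation is to expand $x^T Q y + \bar{x}^T Q y + x^T Q \bar{y} + \bar{x}^T Q \bar{y}$ and verify by a short linear-algebra cancellation that the cross terms in $x$ and $y$ all disappear, leaving just $\mathbf{1}^{mT} Q \mathbf{1}^n = \sum_{i,j} q_{ij}$. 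A parallel computation on the linear parts gives $2\sum_i c_i$ and $2\sum_j d_j$, so that
\[
\sum_{(u,v) \in O(x,y)} f(u,v) \;=\; \sum_{i,j} q_{ij} + 2\sum_i c_i + 2\sum_j d_j \;=\; 4\,\cl{A}(Q,c,d),
\]
invoking the formula from Theorem~\ref{th1}. Thus the mean of $f$ over each orbit equals $\cl{A}(Q,c,d)$.

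By the pigeonhole argument applied within each orbit, at least one of the four solutions in $O(x,y)$ satisfies $f \leq \cl{A}(Q,c,d)$, and hence contributes to $\cl{G}$. Summing over the $2^{m+n-2}$ orbits yields $|\cl{G}| \geq 2^{m+n-2}$.

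I do not foresee a serious obstacle: the only mildly delicate point is the bookkeeping in the four-term expansion of the bilinear form, where I must be careful that the terms $\pm x^T Q \mathbf{1}^n$, $\pm \mathbf{1}^{mT} Q y$, and $\pm x^T Q y$ cancel in the right pattern. Everything else is combinatorial accounting on the orbit partition.
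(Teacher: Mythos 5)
Your proposal is correct and is essentially identical to the paper's own proof: the same partition of $\cl{F}$ into quadruples $\{(x,y),(\bar{x},y),(x,\bar{y}),(\bar{x},\bar{y})\}$, the same observation that each quadruple's objective values sum to $4\cl{A}(Q,c,d)$, and the same pigeonhole selection of a below-average member from each of the $2^{m+n-2}$ blocks. The four-term cancellation you flag as the delicate point does go through exactly as you expect.
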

\begin{proof}For any solution $(x,y)\in \cl{F}$, let $P(x,y)=\{(x,y),(x,{\bd{1}}^n-y), ({\bd{1}}^m-x,y), ({\bd{1}}^m-x,{\bd{1}}^n-y)\}$. It can be verified that $P(x,y)=P(x,{\bd{1}}^n-y)=P({\bd{1}}^m-x,y)=P({\bd{1}}^m-x,{\bd{1}}^n-y)$ and $P(x, y) \neq P(x', y')$ if $x' \notin \{ x, {\bd{1}}^m
- x \}$ or $y' \notin \{ y, {\bd{1}}^n - y \}$.
 Thus, we can partition the solution space $\cl{F}$ into $\frac{1}{4}2^{m+n}=2^{m+n-2}$ disjoint sets $P(x^k,y^k)$,  $k=1,2,\ldots ,2^{m+n-2}=\omega$, say. Note that
\begin{align}\nonumber
f(x,y)+f({\bd{1}}^m-x,y)+f(x,{\bd{1}}^n-y)+f({\bd{1}}^m-x,{\bd{1}}^n-y)& = \sum_{i\in \cl{M}}\sum_{j\in \cl{N}}q_{ij}+2\sum_{i\in \cl{M}}c_i+2\sum_{j\in \cl{N}}d_j\\& = 4\cl{A}(Q,c,d). \label{doeq1}
\end{align}
From equation (\ref{doeq1}), it follows immediately that $$\min\left\{f(x,y), f({\bd{1^m}}-x,y), f(x,{\bd{1}}^n-y),f({\bd{1}}^m-x,{\bd{1}}^n-y)\right\}\leq \cl{A}(Q,c,d).$$ Thus, from each $P(x^k,y^k), k=1,2,\ldots \omega$, choose a solution with smallest objective function value to form the set $D_1$. By construction, $f(x,y)\leq \cl{A}(Q,c,d)$ for all $(x,y)\in D_1$.  Since $|D_1|=2^{m+n-2}$, the result follows.
\end{proof}

The lower bound on $\cl{G}$ established in Theorem~\ref{dth1} is tight. To see this, consider the matrix $Q$ defined by
\begin{align*}
q_{ij} = \begin{cases}
-1 & \text{if } i=m,j=n, \\
0 & \text{otherwise.}
\end{cases}
\end{align*}
and choose $c$ and $d$ as zero vectors in $R^m$ and $R^n$, respectively. Then $\cl{A}(Q,c,d)=\frac{-1}{4}$ and the set of solutions $(x,y)$ with $f(x,y) \leq \cl{A}(Q,c,d)$ is precisely $\cl{G}=\{(x,y) | x_m=y_n=1\}$. Clearly, $|\cl{G}|=2^{m+n-2}$ and hence the bound obtained in Theorem~\ref{dth1} is the best possible. \\

Theorem~\ref{dth1}  establishes that any algorithm that guarantees a solution with objective function value no worse than $\cl{A}(Q,c,d)$ dominates $2^{m+n-2}$ solutions of the BBQP $\cl{P}(Q,c,d)$. In other words, the domination ratio of such an algorithm is at least $1/4$. Our next theorem establishes an upper bound on the dominance number of any polynomial time approximation algorithm for BBQP.

\begin{theorem}\label{cth1}Unless P=NP, no polynomial time algorithm for BBQP can have dominance number more than $2^{m+n}-2^{\lfloor \frac{m+n}{\alpha}\rfloor}$ for any fixed rational number $\alpha > 1$.\end{theorem}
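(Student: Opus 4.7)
The plan is a self-reduction of BBQP to itself by padding with dummy variables, exploiting that a dominance number strictly exceeding $2^{m+n} - 2^{\lfloor (m+n)/\alpha\rfloor}$ is strong enough to force an optimal output on padded instances. First I would restate the dominance hypothesis in a directly usable form: if a polynomial-time algorithm $\Gamma$ always achieves dominance number strictly greater than $2^{m+n} - 2^{\lfloor (m+n)/\alpha\rfloor}$, then on every instance the number of solutions whose objective value is strictly greater than $f(x^\Gamma, y^\Gamma)$ is less than $2^{\lfloor (m+n)/\alpha\rfloor}$, since the solutions not dominated by the output are exactly those with strictly larger value.

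Next I would build the reduction. Given an arbitrary instance $\cl{P}(Q',c',d')$ on $m' + n'$ variables (with $m' \le n'$), pick nonnegative integers $p \le q$ with $p+q = \lceil (m'+n')/(\alpha-1)\rceil$ and form $\cl{P}(Q,c,d)$ on $m = m'+p$ and $n = n'+q$ variables by placing $Q'$ in the upper-left block of $Q$, extending $c'$ and $d'$ by zeros, and filling the remaining entries of $Q$ with zeros. The $p+q$ added variables appear in no nonzero term of the objective, so every solution of the original instance lifts to $2^{p+q}$ solutions of $\cl{P}(Q,c,d)$ with the same value. In particular, $\cl{P}(Q,c,d)$ has at least $2^{p+q}$ optimal solutions, each achieving the optimum of $\cl{P}(Q',c',d')$.

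The arithmetic needed is that $p+q \ge (m'+n')/(\alpha-1)$ rearranges to $(m+n)/\alpha \le p+q$, whence $2^{p+q} \ge 2^{\lfloor (m+n)/\alpha\rfloor}$. If the output of $\Gamma$ on $\cl{P}(Q,c,d)$ were suboptimal, then each of the $2^{p+q}$ optimal solutions would have strictly larger objective value, producing at least $2^{\lfloor (m+n)/\alpha\rfloor}$ solutions beating the output and contradicting the hypothesis. Hence $\Gamma$ returns an optimal solution of $\cl{P}(Q,c,d)$, and restricting to the first $m'+n'$ coordinates gives an optimal solution of $\cl{P}(Q',c',d')$. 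Because $\alpha$ is a fixed rational with $\alpha > 1$, we have $p+q = O(m'+n')$, so the reduction is polynomial, and together with the NP-hardness of BBQP this contradicts P $\ne$ NP.

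The main obstacle I expect is calibrating the pad size: $p+q$ must be large enough that the top $2^{\lfloor (m+n)/\alpha\rfloor}$ solutions of the padded instance are guaranteed to be optimal, yet small enough to keep the reduction polynomial and the condition $m \le n$ intact. Both demands are met because $1/(\alpha-1)$ is a fixed positive constant whenever $\alpha > 1$ is fixed; balancing $p$ and $q$ handles the size convention, and using $\lceil \cdot \rceil$ takes care of integrality. No subtler idea appears to be required beyond these accounting details.
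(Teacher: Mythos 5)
Your proposal is correct and follows essentially the same strategy as the paper: a self-reduction that pads the instance with zero-cost dummy variables so that the optimal solutions are so numerous that any algorithm with the claimed dominance number is forced to output one of them. The only difference is calibration of the pad --- the paper blows the dimensions up multiplicatively to $abm\times abn$ (writing $\alpha=a/b$), whereas you add $\lceil (m'+n')/(\alpha-1)\rceil$ dummy variables, which is a tighter but equivalent accounting.
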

\begin{proof}We show that a polynomial time algorithm $\Omega$ for BBQP with dominance number at least $2^{m+n}-2^{\lfloor \frac{m+n}{\alpha}\rfloor}+1$ can be used to compute an optimal solution to BBQP. Without loss of generality, assume $\alpha=\frac{a}{b}$ where $a$ and $b$ are positive relatively prime integers with $a > b$. Consider an instance $\cl{P}(Q,c,d)$ of BBQP. Let $Q^*=(q^*_{ij})$ be an $ab m\times ab n$ matrix where
 $$
q^*_{ij} = \begin{cases}
q_{ij} & \text{if  $i\in \cl{M}$ and $j\in \cl{N}$}, \\
0 & \text{otherwise.}
\end{cases}
$$
Likewise, let $c^*$ and $d^*$ be vectors in $R^{ab m}$ and $R^{ab n}$ such that
\begin{center}
\begin{tabular}{ccc}
$
c^*_i = \begin{cases}
c_i & \text{if } \displaystyle{i\in \cl{M}}, \\
0 & \text{otherwise}
\end{cases}
$ & and &
$
d^*_j = \begin{cases}
d_j & \text{if } \displaystyle{j\in \cl{N}}, \\
0 & \text{otherwise.}
\end{cases}
$
\end{tabular}
\end{center}
It is easy to verify that from any optimal solution to the BBQP instance $\cl{P}(Q^*,c^*,d^*)$  an optimal solution to $\cl{P}(Q,c,d)$ can be recovered.  The total number of solutions of $\cl{P}(Q^*,c^*,d^*)$ is $2^{ab(m+n)}$ of which at least $ 2^{ab(m+n)-(m+n)}$ are optimal. So the maximum number of non-optimal solutions is $2^{ab(m+n)}-2^{ab(m+n)-(m+n)}$. Solve the BBQP instance $\cl{P}(Q^*,c^*,d^*)$ using $\Omega$ and let  $(x^*,y^*)$ be the resulting solution. By hypothesis, the objective function value of $(x^*,y^*)$ is not worse than that of at least $2^{ab(m+n)} - 2^{\frac{ab(m+n)}{a/b}}+1=2^{ab(m+n)} - 2^{b^2(m+n)}+1$ solutions. Since  $a > b$, we have $2^{ab(m+n)} - 2^{b^2(m+n)}+1 > 2^{ab(m+n)}-2^{ab(m+n)-(m+n)}$. Thus, $(x^*,y^*)$ must be optimal for $\cl{P}(Q^*,c^*,d^*)$. From $(x^*,y^*)$, an optimal solution to $\cl{P}(Q,c,d)$ can be recovered by simply taking the first $m$ components of $x^*$ and first $n$ components of $y^*$. The result now follows from NP-completeness of BBQP.
\end{proof}
Theorem~\ref{cth1} implies that unless P=NP, no polynomial time approximation algorithm for BBQP can have dominance ratio more than $1-2^{\frac{(1-\alpha)}{\alpha}(m+n)}$ for any fixed rational number $\alpha > 1$.

Although we have a closed form formula for  computing the average value of all solutions to BBQP, we now show that computing the median value of all solutions is NP-hard.

  Since $|\cl{F}|$ is even,  there are two values of median, say $\theta_1$ and $\theta_2$, where $\theta_1\leq \theta_2$.  A median finding algorithm could simply produce $\theta_1$ or $\theta_2$, but we may not know precisely, the output is either $\theta_1$ or $\theta_2$.
\begin{theorem}Computing a median of the objective function values of BBQP is NP-hard.\end{theorem}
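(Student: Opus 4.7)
The plan is to reduce from the NP-hard Partition problem: given nonnegative integers $a_1, \ldots, a_n$ with $\sum_{j} a_j = 2S$, decide whether some $T \subseteq \cl{N}$ satisfies $\sum_{j \in T} a_j = S$. Given such an instance I would construct the BBQP instance $\cl{P}(Q, c, d)$ with $m = 1$, taking $Q$ to be the $1 \times n$ zero matrix, $c = 0$, and $d = (a_1, \ldots, a_n)$. Because $Q = 0$ and $c = 0$, the objective collapses to $f(x, y) = \sum_{j=1}^n a_j y_j$, which is independent of the single $x$-variable. Identifying $y \in \{0,1\}^n$ with the subset $T = \{j \in \cl{N} : y_j = 1\}$, each subset sum $\sigma(T) := \sum_{j \in T} a_j$ therefore appears exactly twice in the multiset $\{f(x,y) : (x,y) \in \cl{F}\}$ of $2^{n+1}$ objective values (once for $x = 0$ and once for $x = 1$).

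The next step is a symmetry argument. The involution $T \mapsto \cl{N} \setminus T$ is fixed-point free on the subsets of $\cl{N}$ and satisfies $\sigma(T) + \sigma(\cl{N} \setminus T) = 2S$, so the subset-sum multiset $\{\sigma(T) : T \subseteq \cl{N}\}$ is symmetric about $S$. Writing $N_<$, $N_=$, $N_>$ for the number of subsets whose sum is respectively $<$, $=$, $>$ $S$, symmetry yields $N_< = N_>$; moreover $N_=$ is always even, because $T$ and $\cl{N} \setminus T$ are distinct and contribute to $N_=$ simultaneously whenever $\sigma(T) = S$. Doubling by the free $x$-variable, the sorted list of the $2^{n+1}$ BBQP values begins with $2N_<$ entries below $S$, followed by $2N_=$ entries equal to $S$, followed by $2N_>$ entries above $S$, and the two median positions are $2^n$ and $2^n + 1$.

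A short case analysis finishes the reduction. If Partition is a yes-instance, then $N_= \geq 2$, so $2N_< \leq 2^n - 2$ and $2(N_< + N_=) \geq 2^n + 2$; both median positions fall strictly inside the block of $S$-valued entries, forcing $\theta_1 = \theta_2 = S$. If Partition is a no-instance, then $N_= = 0$ and $N_< = N_> = 2^{n-1}$, so position $2^n$ is the last entry strictly below $S$ and position $2^n + 1$ the first strictly above, giving $\theta_1 < S < \theta_2$. Consequently, whichever of $\theta_1$ or $\theta_2$ a hypothetical polynomial-time median algorithm returns, the returned value equals $S$ precisely when Partition has a yes-instance, which proves NP-hardness. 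The point requiring most care is that the algorithm may freely output either $\theta_1$ or $\theta_2$; this is exactly what motivates exploiting the pairing $T \leftrightarrow \cl{N} \setminus T$ to force $N_=$ to be even, which produces the strict inequalities $2N_< < 2^n < 2(N_< + N_=)$ that pin both median positions inside the $S$-block and thereby remove the ambiguity.
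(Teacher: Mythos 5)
Your proof is correct. It shares the paper's overall strategy---a reduction from PARTITION that places the numbers $a_j$ in the linear term $d$ and exploits the symmetry $\sigma(T)+\sigma(\cl{N}\setminus T)=\sum_j a_j$ of subset sums about $S$---but the gadget and the resolution of the two-median ambiguity are genuinely different. The paper takes $m=2$ and perturbs $Q$ with rows $\pm a_j\epsilon$, so that each subset $H$ spawns four objective values $\sigma(1-\epsilon)<\sigma=\sigma<\sigma(1+\epsilon)$; the sorted list then decomposes into blocks indexed by the distinct sums, and the symmetry $|W_k|=|W_{\kappa+1-k}|$ is used to argue that both medians of the full multiset agree with the median of the distinct sums. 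You instead take $m=1$ and $Q=0$, so each subset sum simply appears twice, and you dispose of the ambiguity with a cleaner combinatorial observation: the complementation involution is fixed-point free, so $N_{=}$ is even, which pins both median positions $2^n$ and $2^n+1$ strictly inside the block of $S$-valued entries in the yes case and strictly outside any $S$-valued entry in the no case. This buys a smaller instance, no $\epsilon$ bookkeeping, and a sharper justification of exactly the point (``the algorithm may return either $\theta_1$ or $\theta_2$'') that the paper's block argument handles more laboriously. The only cosmetic omission is the degenerate case where $\sum_j a_j$ is odd, but there $S\notin\mathbb{Z}$ forces $N_{=}=0$ and your no-instance analysis applies verbatim, so nothing is lost.
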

\begin{proof}
Suppose we have a polynomial time algorithm to compute a median of the objective function values of BBQP. We will show that this algorithm can be used to solve the PARTITION problem, which is defined as follows: Given $n$ positive integers $a_1, a_2,\ldots ,a_n$, determine if there exists a partition $S_1$ and $S_2$ of $N=\{1,2,\ldots ,n\}$ such that $\sum_{j\in S_1}a_j=\sum_{j\in S_2}a_j$. From an instance of PARTITION, construct an instance of BBQP as follows: Choose $c$ as the zero vector, $d_j=a_j$ for $j=1,2,\ldots ,n$. Choose $\cl{M}=\{1,2\}$. Define $q_{1j}=a_j\epsilon$ and $q_{2j}=-a_j\epsilon$, where $\epsilon$ is a very small positive number. For each subset $H$ of $N$, let $y^H$ be its characteristic vector, i.e. $y^H\in \bl{B}^n$ and $y^H_j=1$ if and only if $j\in H$. For each choice of $H$, we can associate four choices for $x$ as $x=(0,0),~x=(1,0),~x=(0,1)$ or $x=(1,1)$. Thus, for each $H$, we get the following solutions $F_H=\{((0,0),y^H),~ ((1,0),y^H),~ ((0,1),y^H),~ ((1,1),y^H)\}$. Now,
$f((0,0),y^H)=\sum_{j\in H}a_j, f((1,0),y^H)=(1+\epsilon)\sum_{j\in H}a_j, f((0,1),y^H)=(1-\epsilon)\sum_{j\in H}a_j$ and $f((1,1),y^H)=\sum_{j\in H}a_j$. Thus, $f((0,1),y^H)<f((0,0),y^H)=f((1,1),y^H)<f((1,0),y^H)$. There are $2^n$ choices for $H$ and hence there are $2^{n+2}$ different solutions for the BBQP constructed.
For each subset $H$ of $N$, let $g(H)=\sum_{j\in H}a_j$ and $G=\{g(H) : H\subseteq N\}$. We first observe that $G$ has two medians and these median values are the same and equal to $\frac{1}{2}\sum_{j\in N}a_j$ if and only if $N$ has the required partition. This follows from the fact that for any $H\subseteq N$, either $g(H)\leq\frac{1}{2}\sum_{j\in N}a_j\leq g(N\setminus H)$ or $g(N\setminus H)\leq \frac{1}{2}\sum_{j\in N}a_j\leq g(H)$.

 Let $\sigma_1 < \sigma_2 <\cdots < \sigma_{\kappa}$ be an ascending arrangement of distinct  $g(H)$, $H\subseteq N$ and let $W_k=\{H \subseteq N : g(H)=\sigma_k\}$. Note that $|W_k|=|W_{\kappa+1-k}|$ and $\sigma_k+\sigma_{\kappa+1-k}= \sum_{j\in N}a_j$. Thus, the required partition exists if and only if median of $\{\sigma_1,\sigma_2,\ldots ,\sigma_{\kappa}\}=\frac{1}{2}\sum_{j\in N}a_j$. Consider an ascending arrangement of $f(x,y)$ for all solutions $(x,y)$ of the BBQP constructed. This can be grouped as blocks of values $B_1 < B_2 < \cdots < B_{\kappa}$ where the block $B_k$ has the structure
 \begin{align*}
 \overbrace{\sigma_k(1\!-\!\epsilon)=\sigma_k(1\!-\!\epsilon)=\cdots =\sigma_k(1\!-\!\epsilon)}^{\text{repeated $|W_k|$ times}} < \underbrace{\sigma_k=\sigma_k=\cdots=\sigma_k }_{\text{repeated $2|W_k|$ times}} < \overbrace{\sigma_k(1\!+\!\epsilon)=\sigma_k(1\!+\!\epsilon)=\cdots =\sigma_k(1\!+\!\epsilon)}^{\text{repeated $|W_k|$ times}}
 \end{align*}
 \noindent for $k=1,2,\ldots ,\kappa$. Thus, median of $\{\sigma_1,\sigma_2,\ldots ,\sigma_{\kappa}\}$ is the same as median of the objective function values of BBQP. Thus, the required partition exists if and only both the median values are the same and equal to $\frac{1}{2}\sum_{j\in N}a_j$. The proof now follows from the NP-completeness of PARTITION.
\end{proof}
It may be noted that the above theorem does not rule out the possibility of a polynomial time algorithm with dominance number $2^{m+n-1}$\@.

%
%

\section{Average value of solutions and local search}
In this section we consider two natural local search heuristics for BBQP and show that the solution produced could have objective function value worse than $\cl{A}(Q,c,d)$. One of the popular heuristics for BBQP is the {\it alternating algorithm} proposed by many authors~\cite{lu,gills,kp}. The algorithm starts with a candidate solution $x^0$ and try to choose an optimal $y^0$. Then fix $y^0$ and tries to find the best candidate for $x$, say $x^1$ yielding a solution $(x^1,y^0)$. These operations can be carried out using the formulas

\begin{center}
\begin{tabular}{ccc}
$
y^0_j = \begin{cases}
1 & \text{if } \displaystyle{\sum_{i\in \cl{M}} q_{ij} x^0_i + d_j > 0}, \\
0 & \text{otherwise,}
\end{cases}
$ & and &
$
x^1_i = \begin{cases}
1 & \text{if } \displaystyle{\sum_{j\in \cl{N}} q_{ij} y^0_j + c_i > 0}, \\
0 & \text{otherwise,}
\end{cases}
$
\end{tabular}
\end{center}

 Now fix $x=x^1$ and try to choose the best $y=y^1$ and the process is continued until no improvement is possible by fixing either $x$ variables or $y$ variables. The algorithm terminates when a locally optimal solution is reached. From experimental analysis~\cite{kp}, it is known that this algorithm produces reasonably good solutions on average. To the best of our knowledge, worst-case behavior of this algorithm has not been investigated.

\begin{theorem}The objective function value of a locally optimal solution produced by the alternating algorithm could be arbitrarily bad and could be worse than $\cl{A}(Q,c,d)$.
\end{theorem}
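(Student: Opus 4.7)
The plan is to exhibit a one-parameter family of BBQP instances on which the alternating algorithm, started at a natural initial point, terminates at the all-zeros solution $({\bd{0}}^m,{\bd{0}}^n)$ with objective value $0$, while $\cl{A}(Q,c,d)$ can be made arbitrarily large. This simultaneously establishes both claims: the produced solution is strictly worse than $\cl{A}(Q,c,d)$, and the gap is unbounded.

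For any $K>0$, any $m,n\geq 1$, and a small $\epsilon>0$, I would set $q_{11}=4K$ with all other $q_{ij}=0$, and take $c=-\epsilon\,{\bd{1}}^m$ and $d=-\epsilon\,{\bd{1}}^n$. Theorem~\ref{th1} then yields $\cl{A}(Q,c,d)=K-\tfrac{(m+n)\epsilon}{2}$, which grows without bound as $K\to\infty$. Initialize the alternating algorithm at $x^0={\bd{0}}^m$. The update rule sets $y^0_j=1$ iff $\sum_i q_{ij}\cdot 0+d_j=-\epsilon>0$, which fails, so $y^0={\bd{0}}^n$; symmetrically $x^1={\bd{0}}^m$, and the algorithm halts at $({\bd{0}}^m,{\bd{0}}^n)$ with value $0$.

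To confirm that this is a \emph{strict} local optimum of the alternating neighborhood, fix $y={\bd{0}}^n$ and observe that $f(x,{\bd{0}}^n)=cx=-\epsilon\sum_i x_i\leq 0$, uniquely maximized by $x={\bd{0}}^m$; the symmetric argument handles a fixed $x={\bd{0}}^m$. Hence neither alternation step can move away from $({\bd{0}}^m,{\bd{0}}^n)$, and the algorithm genuinely terminates there. Since the terminal value is $0$ while $\cl{A}(Q,c,d)\to\infty$ as $K\to\infty$, both assertions follow. The only mild subtlety is ensuring that the local optimum is \emph{strict}, so the conclusion does not rest on the tie-breaking convention built into the update formula; this is the sole role of the small $\epsilon$-penalty in $c$ and $d$, which does not affect the asymptotics of $\cl{A}$.
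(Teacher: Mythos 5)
Your proposal is correct and is essentially the same argument as the paper's: both exhibit an explicit instance with a single large entry in $Q$ and a starting point from which the alternating algorithm immediately stalls at a local optimum whose value stays bounded while $\cl{A}(Q,c,d)$ grows without bound. The only real difference is that your trap is the trivial all-zeros solution (made a strict local optimum via the small negative linear terms), whereas the paper traps the algorithm at a nontrivial solution of value $1$ by planting a decoy entry $q_{11}=1$ alongside $q_{nn}=M$; the paper's variant is marginally more persuasive since it cannot be dismissed as starting from a degenerate point, but both are valid.
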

\begin{proof}
Choose $m=n$, $c=d=0$, set $q_{11} =1, q_{nn}=M$ and $q_{ij}=0 $ for all other combinations of $i$ and $j$. Choose the starting solution $x^0_1=1$ and $x^0_i=0$ for $i\neq 1$.  The algorithm will choose $y^0_1=1$ and $y^0_j=0$ for $j\neq 1$. Now $(x^0,y^0)$ will be locally optimal solution with objective function value 1, but the optimal objective function value is $M+1$ for any $M > 0$. The average cost of a solution is $\frac{M+1}{4}$. Thus, for $M > 3$ the solution produced by the alternating algorithm is less than $\cl{A}(Q,c,d)$. In fact, by choosing $M$ large, the solution can be made arbitrarily bad.
\end{proof}

Despite this example, it is easy to see that a solution produced by the alternating algorithm dominates at least $2^m+2^n-1$ solutions.

Let us now consider a more general neighborhood, which is a variation of the $k$-exchange neighborhood studied for various combinatorial optimization problems. For any $(x^0,y^0)\in \cl{F}$, let $\bl{N}^{hk}$ be the set of solutions in $\cl{F}$ obtained by flipping at most $h$ components of $x^0$ and at most $k$ components of $y^0$. If $h=m$ or $k=n$, we ignore ``,'' in the definition of $\bl{N}^{h,k}$. Note that $|\bl{N}^{h,k}|=\left(\sum_{j=0}^h{m\choose j}\right )\left(\sum_{i=0}^k {n\choose i}\right )$, and the best solution in this neighborhood can  be identified in polynomial time for fixed $h$ and $k$. A more powerful neighborhood is $\bl{N}^{\alpha}=\bl{N}^{m\alpha}\cup \bl{N}^{\alpha n}$ and $|\bl{N}^{\alpha}|=2^m\sum_{j=0}^\alpha{n\choose j}+2^n\sum_{i=0}^\alpha {m\choose i}-\sum_{i=0}^\alpha {m\choose i}\sum_{j=0}^\alpha{n\choose j}$. Again, this neighborhood can also be searched for an improving solution in polynomial time for fixed $\alpha$~\cite{kp}. It may be noted that a solution produced by the alternating algorithm is locally optimal with respect to the neighborhood $\bl{N}^0=\bl{N}^{m0}\cup \bl{N}^{0n}$. Glover et al.~\cite{gl4} considered the neighborhoods $\bl{N}^1$, $\bl{N}^2$, and $\bl{N}^{1,1}$. They provided fast and efficient algorithms for exploring these neighborhoods supported by detailed computational analysis. They also considered tabu search algorithms using these neighborhoods in a hybrid form. Computational results with these algorithms provided very high quality solutions, improving several benchmark instances. Nonetheless, our next theorem shows that even such very powerful local search algorithms could provide solutions with objective function values that are inferior to $\cl{A}(Q,c,d)$ even if we allow $\alpha$ to be a function of $n$.

\begin{theorem}\label{loc1}A locally optimal solution to BBQP with respect to the neighborhood $\bl{N}^{\alpha}=\bl{N}^{m\alpha}\cup \bl{N}^{\alpha n}$ could be worse than average for any $\alpha \leq \left\lfloor\frac{n}{5}\right \rfloor$.
\end{theorem}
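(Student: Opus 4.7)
The plan is to exhibit an explicit family of BBQP instances together with a designated solution that is locally optimal in $\bl{N}^{\alpha}$ but whose objective value is strictly below $\cl{A}(Q,c,d)$. My construction takes $m=n$, sets $Q$ equal to the $n\times n$ all-ones matrix (so $q_{ij}=1$ for all $i,j$), and puts $c_i=-\alpha$ for every $i\in\cl{M}$ and $d_j=-\alpha$ for every $j\in\cl{N}$. The candidate for a trapping local optimum is the trivial solution $(x^0,y^0)=(\bd{0}^m,\bd{0}^n)$, for which $f(x^0,y^0)=0$. The key observation is that, under this choice of data, the objective collapses to a function of Hamming weights alone:
\[
f(x,y) = \Bigl(\sum_{i\in\cl{M}} x_i\Bigr)\Bigl(\sum_{j\in\cl{N}} y_j\Bigr) - \alpha\sum_{i\in\cl{M}} x_i - \alpha\sum_{j\in\cl{N}} y_j,
\]
which makes all the subsequent verification essentially one-dimensional.

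Next I would verify local optimality. Let $(x,y)\in\bl{N}^{m\alpha}$ with base point $(\bd{0}^m,\bd{0}^n)$; by definition $\sum_j y_j\le\alpha$ while $x\in\bl{B}^m$ is unconstrained. Rewriting the objective as $f(x,y)=\bigl(\sum_i x_i\bigr)\bigl(\sum_j y_j-\alpha\bigr)-\alpha\sum_j y_j$, the coefficient of $\sum_i x_i$ is non-positive, so the maximum over $x$ is attained at $x=\bd{0}^m$, yielding $f(x,y)\le -\alpha\sum_j y_j\le 0=f(x^0,y^0)$. Since $m=n$, $Q=Q^T$, and $c=d$ in this construction, the instance is symmetric in $(x,y)$, and an identical argument handles neighbors in $\bl{N}^{\alpha n}$. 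Hence $(x^0,y^0)$ is locally optimal with respect to $\bl{N}^{\alpha}=\bl{N}^{m\alpha}\cup\bl{N}^{\alpha n}$.

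To close, I would invoke Theorem~\ref{th1} to compute
\[
\cl{A}(Q,c,d)=\tfrac14\, n^2-\alpha\, n=\frac{n(n-4\alpha)}{4},
\]
and the hypothesis $\alpha\le\lfloor n/5\rfloor$ gives $4\alpha\le 4n/5<n$, so that $\cl{A}(Q,c,d)>0=f(x^0,y^0)$, completing the proof. The only substantive step is choosing the construction: the idea is to match the linear penalties $c$ and $d$ to the neighborhood radius $\alpha$ so that, inside $\bl{N}^{\alpha}$, the gain from activating any coordinate of $x$ (or $y$) is exactly cancelled by the penalty, while the aggregate positive mass of $Q$ still dominates the linear penalty in the formula for $\cl{A}$. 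Once this construction is on the table, the verification is a sequence of elementary inequalities on Hamming weights, so I do not anticipate any further obstacle.
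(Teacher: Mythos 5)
Your construction is correct and establishes the theorem, but by a genuinely different and substantially simpler route than the paper. The paper builds a homogeneous instance ($c=d=0$) whose matrix has a large entry $\lambda$ at position $(m,n)$, entries $-1$ elsewhere in the last row and column, and a small positive value $a=6/n$ everywhere else; the trapped solution is the nontrivial point $x_n=y_n=1$, and local optimality requires checking four families of inequalities before computing $\cl{A}(Q,c,d)-\lambda>0$ for $n\ge 6$. Your all-ones $Q$ with uniform penalties $c_i=d_j=-\alpha$ reduces everything to Hamming weights: with $r=\sum_i x_i$ and $s=\sum_j y_j\le\alpha$ you get $f(x,y)=r(s-\alpha)-\alpha s\le 0$, symmetry handles the other half of the neighborhood, and $\cl{A}(Q,c,d)=n(n-4\alpha)/4>0$; your argument even covers the wider range $\alpha<n/4$ rather than $\alpha\le\lfloor n/5\rfloor$, and needs no divisibility or size assumptions on $n$. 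What the paper's heavier construction buys is that its bad local optimum is a \emph{nontrivial} solution of a \emph{homogeneous} instance: since the paper explicitly observes that trivial solutions (those with $x={\bd{0}}^m$ or $y={\bd{0}}^n$) are optimized separately and attention can be restricted to nontrivial ones, a local search trapped at $({\bd{0}}^m,{\bd{0}}^n)$ is a degenerate failure that a sensible implementation would never report. Your proof is valid for the theorem as literally stated, but you should be aware of this weakness; if you want a demonstration with the same force as the paper's, you would need to relocate the trap to a nontrivial solution.
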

\begin{proof}Consider the matrix $Q$ defined as
\begin{align}\label{peq1}
q_{ij} = \begin{cases}
\lambda & \text{if } i=m, j=n, \\
-1 & \text{if $i=m$ or $j=n$ but $(i,j)\neq (m,n)$},\\
a & \text{otherwise}.
\end{cases}
\end{align}
and choose $c$ and $d$ as zero vectors. Without loss of generality, we assume $m=n$. Otherwise, we can extend the matrix $Q$ into an $n\times n$ matrix by adding $n-m$ rows of zeros and extending the vector $c$ into an $n$-vector by making the last $n-m$ entries zeros. We also assume that $n$ is a multiple of 5. Consider the solution $(x^0,y^0)$ where $x^0_n=y^0_n=1$ and all other components are zero. Also, let $\alpha=\frac{n}{5}$ and assume $n\geq 6$. \\

Let $N^r_x(0)$ be the set of all $x\in \bl{B}^n$ with $x_n=0$ obtained by flipping exactly $r$ entries of $x^0$ and  $N^r_x(1)$ be the set of all $x\in \bl{B}^n$ with $x_n=1$ obtained by flipping exactly $r$ entries of $x^0$. Define $N^r_y(0)$ and $N^r_y(0)$ analogously. Note that for any $(x,y)\in \bl{N}^{\alpha n}$, $x\in N^r_x(0)\cup N^r_x(1)$ and  $y\in N^s_y(0)\cup N^s_y(1)$ for some $0\leq r\leq \alpha, 0\leq s\leq n$ and for any $(x,y)\in \bl{N}^{n\alpha}$, $x\in N^r_x(0)\cup N^r_x(1)$ and  $y\in N^s_y(0)\cup N^s_y(1)$ for some $0\leq r\leq n, 0\leq s\leq \alpha$. Thus, for $(x,y) \in N^{\alpha}$ we have

\begin{align}\label{peq}
f(x,y) = \begin{cases}
(r\!-\!1)(s\!-\!1)a& \text{if } x\in N^r_x(0), y\in N^s_y(0) \text{ for } (r,s)\in I^{\alpha}_n\cup I^n_{\alpha} \\
(ra\!-\!1)(s\!-\!1)& \text{if } x\in N^r_x(1), y\in N^s_y(0) \text{ for } (r,s)\in I^{\alpha}_n\cup I^n_{\alpha} , \\
(r\!-\!1)(sa\!-\!1)& \text{if } x\in N^r_x(0), y\in N^s_y(1) \text{ for } (r,s)\in I^{\alpha}_n\cup I^n_{\alpha} , \\
rsa\!-\!r\!-\!s\!+\!\lambda & \text{if } x\in N^r_x(1), y\in N^s_y(1) \text{ for } (r,s)\in I^{\alpha}_n\cup I^n_{\alpha}.
\end{cases}
\end{align}

where $I^p_q = \{0,1,\ldots,p\}\times \{0,1,\ldots,q\}$.\\
Thus, $(x^0,y^0)$ is locally optimal with respect to $N^{\alpha}$ if and only if
\begin{align}
(r\!-\!1)(s\!-\!1)a&\leq \lambda \text{ for all } (r,s)\in I^{\alpha}_n\cup I^n_{\alpha} \label{ll1}\\
(ra\!-\!1)(s\!-\!1)&\leq \lambda \text{ for all } (r,s)\in I^{\alpha}_n\cup I^n_{\alpha} , \label{ll2}\\
(r\!-\!1)(sa\!-\!1)&\leq \lambda \text{ for all } (r,s)\in I^{\alpha}_n\cup I^n_{\alpha} , \label{ll3}\\
rsa\!-\!r\!-\!s\! &\leq 0 \text{ for all } (r,s)\in I^{\alpha}_n\cup I^n_{\alpha}\label{ll4}.
\end{align}

It is not very difficult to verify that conditions (\ref{ll1}), (\ref{ll2}), (\ref{ll3}), and (\ref{ll4}), are satisfied if
\begin{align}
(\alpha-1)(n-1)a&\leq\lambda,\label{c1}\\
(n-1)(\alpha a-1)&\leq\lambda,\label{c2}\\
(\alpha-1)(na-1)&\leq\lambda,\text{ and}\label{c3}\\
a\alpha n- \alpha - n&\leq 0\label{c4}.
\end{align}
Choose $a=\frac{6}{n}$. Then inequality (\ref{c4}) holds  and inequality (\ref{c1}) implies inequalities (\ref{c2}) and (\ref{c3}). Choose $\lambda=(\alpha-1)(n-1)\frac{6}{n}$. Then $(x^0,y^0)$ is locally optimal. Now,

\begin{align*}\cl{A}(Q,c,d)-f(x^0,y^0)&=\frac{1}{4}((n-1)^2a-(2n-2)+\lambda)-\lambda\\
&=\frac{1}{4n}(\frac{2}{5}n^2+\frac{58}{5}n-12)> 0 \text{ for } n \geq 6.
\end{align*}
This completes the proof.
\end{proof}

As an immediate corollary, we have the following result.
\begin{corollary}For any fixed $h$ and $k$, the objective function value of a locally optimal solution with respect to the neighborhood $\bl{N}^{hk}$ could be worse than $\cl{A}(Q,c,d)$ for sufficiently large $m$ and $n$.
\end{corollary}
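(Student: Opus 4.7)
The plan is to reduce this corollary directly to Theorem~\ref{loc1} via a straightforward neighborhood containment, without constructing any new instance. Fix $h$ and $k$ and set $\alpha = \max(h, k)$. For any $n$ that is a multiple of $5$ and satisfies $n \geq \max(6, 5\alpha)$, the hypothesis $\alpha \leq \lfloor n/5 \rfloor$ of Theorem~\ref{loc1} is met; since $h$ and $k$ are fixed constants, this holds for all sufficiently large $n$ (and hence, by the zero-padding trick used inside the proof of Theorem~\ref{loc1} to reduce the general case to $m = n$, for all sufficiently large $m$ as well).

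Next I would check the containment $\bl{N}^{hk} \subseteq \bl{N}^{\alpha}$. For any reference solution $(x^0, y^0)$, a move that flips at most $h$ entries of $x^0$ and at most $k$ entries of $y^0$ flips at most $m$ entries of $x^0$ (trivially) and at most $\alpha$ entries of $y^0$ (since $k \leq \alpha$), so it lies in $\bl{N}^{m\alpha}$, hence in $\bl{N}^{\alpha} = \bl{N}^{m\alpha} \cup \bl{N}^{\alpha n}$. As a consequence, any solution that is locally optimal with respect to the larger neighborhood $\bl{N}^{\alpha}$ is automatically locally optimal with respect to the smaller neighborhood $\bl{N}^{hk}$.

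Finally I would invoke Theorem~\ref{loc1} with this value of $\alpha$ and with $m = n$: the construction inside its proof supplies an explicit instance $\cl{P}(Q,c,d)$ with $c = d = \bd{0}^n$ together with a solution $(x^0, y^0)$ that is $\bl{N}^{\alpha}$-locally optimal and satisfies $f(x^0, y^0) < \cl{A}(Q,c,d)$. By the containment just established, this same $(x^0, y^0)$ is also $\bl{N}^{hk}$-locally optimal on the same instance, which witnesses the corollary. There is really no hard step here: no new calculation or instance construction is required, and the only item to verify is the tautological neighborhood inclusion, the main point being simply that growing $n$ while holding $h$ and $k$ fixed eventually makes $\max(h,k)$ small enough to fall into the range covered by Theorem~\ref{loc1}.
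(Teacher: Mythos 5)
Your proposal is correct and follows essentially the same route as the paper: set $\alpha=\max\{h,k\}$, observe that an $\bl{N}^{\alpha}$-locally optimal solution is automatically $\bl{N}^{hk}$-locally optimal, and invoke Theorem~\ref{loc1} for large enough $n$. Your write-up is in fact a bit more careful than the paper's, since you make the containment $\bl{N}^{hk}\subseteq\bl{N}^{m\alpha}\subseteq\bl{N}^{\alpha}$ and the size condition $\alpha\leq\lfloor n/5\rfloor$ explicit.
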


\begin{proof}
Choose $\alpha=\max\{h,k\}$. Then a locally optimal solution with respect to $\mathbb{N}^\alpha$ is not worse than a locally optimal with respect to $\bl{N}^{hk}$. The result now follows from Theorem~\ref{loc1}.
\end{proof}

These examples motivates us to develop polynomial time algorithms for BBQP that guarantee a solution with objective function value no worse than $\cl{A}(Q,c,d)$.

\section{Algorithms with no worse than average guarantee}

We first consider a very simple algorithm to compute a solution with objective function value guaranteed to be no worse than $\cl{A}(Q,c,d)$. The algorithm simply takes fractional vectors $x\in \bl{U}^n$ and $y\in \bl{U}^n$ and applies a rounding scheme to produce a solution for BBQP.
Let $x\in \bl{U}^m$ and $y\in \bl{U}^n$. Extending the definition of $f(x,y)$ for 0-1 vectors, define
$$f(x,y)=\sum_{i\in \cl{M}}\sum_{j\in \cl{N}}q_{ij}x_iy_j+\sum_{i\in \cl{M}}c_ix_i+\sum_{j\in \cl{N}}d_jy_j.$$
Consider the solutions $y^*\in \bl{B}^n$ and $x^*\in \bl{B}^m$ given by
\begin{equation}\label{eq1}
y^*_j = \begin{cases}
1 & \text{if } \displaystyle{d_j+\sum_{i\in \cl{M}} q_{ij}x_i   > 0}, \\
0 & \text{otherwise,}
\end{cases}
\end{equation}
\begin{equation}\label{eq2}
x^*_i = \begin{cases}
1 & \text{if } \displaystyle{c_i+\sum_{j\in \cl{N}} q_{ij} y^*_j  > 0}, \\
0 & \text{otherwise.}
\end{cases}
\end{equation}
Note that $x^*$ is the optimal 0-1 vector when $y$ is fixed at $y^*$, and equation (\ref{eq1}) rounds the $y$ to $y^*$ using a prescribed rounding criterion. The process of constructing $(x^*,y^*)$ from $(x,y)$ thus called a {\it round-$y$ optimize-$x$ algorithm} or RyOx-algorithm. The next theorem establishes a lower bound on the objective function value of the solution produced by the RyOx-algorithm.
\begin{theorem}\label{th3}$f(x^*,y^*) \geq f(x,y)$.\end{theorem}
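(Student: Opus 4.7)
The plan is to prove the inequality by a two-step telescoping argument, writing
\[
f(x^*,y^*) - f(x,y) = \bigl[f(x^*,y^*)-f(x,y^*)\bigr] + \bigl[f(x,y^*)-f(x,y)\bigr],
\]
and showing that each bracketed difference is nonnegative. The key observation is that the $y$-rounding step (\ref{eq1}) and the $x$-optimization step (\ref{eq2}) are each defined by the sign of the marginal coefficient of the corresponding variable, which is exactly what is needed to guarantee monotone improvement.

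For the second bracket, I would expand the bilinear objective to obtain
\[
f(x,y^*)-f(x,y) \;=\; \sum_{j\in\cl{N}}(y^*_j-y_j)\,\gamma_j, \qquad \gamma_j := d_j + \sum_{i\in\cl{M}} q_{ij}x_i.
\]
Since $y_j\in[0,1]$ and $y^*_j$ is defined by (\ref{eq1}) to equal $1$ when $\gamma_j>0$ and $0$ otherwise, a simple case split shows $(y^*_j-y_j)\gamma_j\geq 0$ in both cases (in the first, $y^*_j-y_j=1-y_j\geq 0$ and $\gamma_j>0$; in the second, $y^*_j-y_j=-y_j\leq 0$ and $\gamma_j\leq 0$). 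Hence $f(x,y^*)\geq f(x,y)$.

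For the first bracket, an identical expansion with $y$ fixed at $y^*$ gives
\[
f(x^*,y^*)-f(x,y^*) \;=\; \sum_{i\in\cl{M}}(x^*_i-x_i)\,\delta_i, \qquad \delta_i := c_i + \sum_{j\in\cl{N}} q_{ij}y^*_j,
\]
and the definition (\ref{eq2}) of $x^*_i$ makes each summand nonnegative by the same case analysis. Adding the two inequalities yields the theorem.

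I do not anticipate a real obstacle: the whole argument reduces to noticing that rounding each variable in the direction of the sign of its linear coefficient (with the other block held fixed) is monotone in the objective, which is exactly the standard fact underlying the alternating algorithm. The only minor care point is that (\ref{eq1}) uses a strict inequality, so I should note that the boundary case $\gamma_j=0$ (where $y^*_j=0$) still yields a nonnegative contribution because the factor $(y^*_j-y_j)\gamma_j = 0$, and similarly for $\delta_i=0$.
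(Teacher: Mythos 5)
Your proposal is correct and is essentially the paper's own argument: the paper likewise passes through the intermediate point $f(x,y^*)$, bounding $f(x,y)\leq f(x,y^*)$ by construction of $y^*$ and then $f(x,y^*)\leq f(x^*,y^*)$ by construction of $x^*$. Your version merely makes explicit the sign case analysis that the paper compresses into the phrases ``by construction of $y^*$'' and ``by construction of $x^*$.''
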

\begin{proof}
\setlength{\abovedisplayskip}{0pt}
{\allowdisplaybreaks
\begin{align*}
f(x,y) & = \sum_{i\in \cl{M}}\sum_{j\in \cl{N}}q_{ij}x_iy_j+\sum_{i\in \cl{M}}c_ix_i+\sum_{j\in \cl{N}}d_jy_j\\ &=
 \left[\sum_{j\in \cl{N}}\left(\sum_{i\in \cl{M}}q_{ij}x_i+d_j\right )\right ]y_j+\sum_{i\in \cl{M}}c_ix_i\\
&\leq \sum_{j\in \cl{N}}\left(\sum_{i\in \cl{M}}q_{ij}x_i+d_j\right )y_j^*+\sum_{i\in \cl{M}}c_ix_i \tag{by construction of $y^*$}\\
&=\sum_{j\in \cl{N}}d_jy_j^* +\sum_{i\in \cl{M}} \left(\sum_{j\in \cl{N}} q_{ij}y_j^*+c_i\right )x_i\\
&\leq\sum_{j\in \cl{N}}d_jy_j^* +\sum_{i\in \cl{M}} \left(\sum_{j\in \cl{N}} q_{ij}y_j^*+c_i\right )x_i^* \tag{by construction of $x^*$}\\
&=f(x^*,y^*).\qedhere
\end{align*}
}
\end{proof}
Note that $(x^*,y^*)$ can be constructed in $O(mn)$ time whenever $x$ and $y$ are rational numbers. We can also round $x$ first to obtain $x^0\in \bl{B}^m$ and choose optimal $y=y^0$ by fixing $x$ at $x^0$.  This is done using the rounding scheme given by the following equations:
\begin{center}
\begin{tabular}{ccc}
$
x^0_i = \begin{cases}
1 & \text{if } \displaystyle{c_i+\sum_{j\in \cl{N}} q_{ij}y_j   > 0}, \\
0 & \text{otherwise,}
\end{cases}
$
& and &
$
y^0_j = \begin{cases}
1 & \text{if } \displaystyle{d_j+\sum_{i\in \cl{M}} q_{ij} x^0_i  > 0}, \\
0 & \text{otherwise,}
\end{cases}
$\\
\end{tabular}
\end{center}

\begin{theorem}\label{th3-1}$f(x^0,y^0) \geq f(x,y)$.\end{theorem}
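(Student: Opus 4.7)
The plan is to mirror the argument used in the proof of Theorem~\ref{th3}, but exchanging the roles of $x$ and $y$ so that the rounding now happens on the $x$-side first and the optimization on the $y$-side second. The key observation is the same monotonicity principle: if $z \in [0,1]$ and the coefficient $\gamma$ multiplying $z$ in some linear expression is positive, then $\gamma z \leq \gamma \cdot 1$, while if $\gamma \leq 0$, then $\gamma z \leq \gamma \cdot 0 = 0$; in either case replacing $z$ by the indicator $[\gamma > 0]$ can only increase the expression.

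First I would regroup $f(x,y)$ so that the $x_i$-coefficients are exposed:
\[
f(x,y) = \sum_{i\in \cl{M}}\left(c_i+\sum_{j\in \cl{N}} q_{ij}y_j\right)x_i+\sum_{j\in \cl{N}}d_jy_j.
\]
By the defining rule for $x^0$, we have $x^0_i = 1$ exactly when the coefficient $c_i+\sum_{j\in \cl{N}} q_{ij}y_j$ is positive and $x^0_i = 0$ otherwise. Applying the monotonicity principle component-wise to each $x_i \in [0,1]$ therefore yields
\[
f(x,y) \leq \sum_{i\in \cl{M}}\left(c_i+\sum_{j\in \cl{N}} q_{ij}y_j\right)x^0_i+\sum_{j\in \cl{N}}d_jy_j = f(x^0,y).
\]

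Next I would regroup $f(x^0,y)$ to expose the $y_j$-coefficients:
\[
f(x^0,y) = \sum_{i\in \cl{M}}c_ix^0_i+\sum_{j\in \cl{N}}\left(d_j+\sum_{i\in \cl{M}} q_{ij}x^0_i\right)y_j.
\]
By the defining rule for $y^0$, $y^0_j = 1$ precisely when the coefficient $d_j+\sum_{i\in \cl{M}} q_{ij}x^0_i$ is positive and $y^0_j = 0$ otherwise. The same monotonicity argument, now applied to each $y_j \in [0,1]$, gives
\[
f(x^0,y) \leq \sum_{i\in \cl{M}}c_ix^0_i+\sum_{j\in \cl{N}}\left(d_j+\sum_{i\in \cl{M}} q_{ij}x^0_i\right)y^0_j = f(x^0,y^0).
\]
Chaining the two inequalities delivers $f(x,y) \leq f(x^0,y^0)$, which is the desired conclusion.

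There is really no main obstacle here: the proof is a direct symmetric analogue of Theorem~\ref{th3}, and the only care required is to regroup the quadratic term correctly so that the appropriate one-variable-at-a-time monotonicity step matches the rounding rule exactly. In particular, the argument does not rely on $x$ or $y$ being rational, only on $x \in \bl{U}^m$ and $y \in \bl{U}^n$, so the statement holds for any fractional starting point just as in Theorem~\ref{th3}.
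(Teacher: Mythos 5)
Your proof is correct and is precisely the argument the paper intends: the paper omits the proof of Theorem~\ref{th3-1} with the remark that it ``follows along the same line as Theorem~\ref{th3},'' and your two-step chain $f(x,y)\leq f(x^0,y)\leq f(x^0,y^0)$, obtained by regrouping to expose first the $x_i$-coefficients and then the $y_j$-coefficients, is exactly that symmetric analogue. Your closing observation that only $x\in\bl{U}^m$, $y\in\bl{U}^n$ is needed (not rationality) is also accurate.
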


The proof of Theorem~\ref{th3-1} follows along the same line as Theorem~\ref{th3} and hence omitted. The process of constructing $(x^0,y^0)$ is called {\it round-$x$ optimize-$y$ algorithm} or RxOy-algorithm. The complexity of the RxOy-algorithm is also $O(mn)$.

\begin{corollary}\label{cl1}
A solution $(\bar{x},\bar{y})$ for BBQP satisfying $f(\bar{x},\bar{y}) \geq \cl{A}(Q,c,d)$ can be obtained in $O(mn)$ time.
\end{corollary}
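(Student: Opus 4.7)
The plan is to exhibit a specific fractional starting pair $(x,y) \in \bl{U}^m \times \bl{U}^n$ whose extended objective value equals the average $\cl{A}(Q,c,d)$ exactly, and then feed this pair into either the RyOx- or the RxOy-algorithm.

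First I would take the trivial midpoint $x = \tfrac{1}{2}{\bd{1}}^m$ and $y = \tfrac{1}{2}{\bd{1}}^n$. Plugging these into the extended definition of $f$ gives
\[
f(x,y) = \sum_{i\in\cl{M}}\sum_{j\in\cl{N}} q_{ij}\cdot\tfrac{1}{2}\cdot\tfrac{1}{2} + \sum_{i\in\cl{M}} c_i\cdot\tfrac{1}{2} + \sum_{j\in\cl{N}} d_j\cdot\tfrac{1}{2} = \tfrac{1}{4}\sum_{i,j}q_{ij} + \tfrac{1}{2}\sum_i c_i + \tfrac{1}{2}\sum_j d_j,
\]
which matches the closed form $\cl{A}(Q,c,d)$ from Theorem~\ref{th1} on the nose. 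So the starting fractional pair has extended objective value exactly equal to the average.

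Next I would apply Theorem~\ref{th3} (or equivalently Theorem~\ref{th3-1}) to this pair, obtaining a $0$-$1$ solution $(\bar{x},\bar{y})=(x^*,y^*)$ with $f(\bar{x},\bar{y}) \geq f(x,y) = \cl{A}(Q,c,d)$. Since the rounding and optimization steps defined by equations~(\ref{eq1}) and~(\ref{eq2}) each require computing, for each index, an inner product of length $m$ or $n$ with rational inputs of bounded size, the total work is $O(mn)$, as already noted after Theorems~\ref{th3} and~\ref{th3-1}. There is no serious obstacle here; the only thing to check is that the halves can be handled as exact rationals so the comparison ``$>0$'' in the rounding rules is well defined, which is immediate since every quantity involved is a half-integer combination of the input data.
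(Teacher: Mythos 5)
Your proposal is correct and follows exactly the paper's own argument: start from the all-halves fractional point $x=\tfrac{1}{2}{\bd{1}}^m$, $y=\tfrac{1}{2}{\bd{1}}^n$, observe that its extended objective value equals $\cl{A}(Q,c,d)$ by Theorem~\ref{th1}, and then round via the RyOx- or RxOy-algorithm using Theorem~\ref{th3} or~\ref{th3-1}, all in $O(mn)$ time. The only difference is that you spell out the verification that $f(\tfrac{1}{2}{\bd{1}}^m,\tfrac{1}{2}{\bd{1}}^n)=\cl{A}(Q,c,d)$, which the paper leaves implicit.
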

\begin{proof}
Let $x_i=1/2$ for all $i\in \cl{M}$ and $y_j=1/2$ for all $j\in \cl{N}$. Then it can be verified that $f(x,y)=\cl{A}(Q,c,d)$, but $(x,y)$ is not feasible for BBQP. Now, choose $(\bar{x},\bar{y})$ as the output of either the RyOx-algorithm or the RxOy-algorithm. The result now follows from Theorems~\ref{th3} or \ref{th3-1}.
\end{proof}

In view of Corollary~\ref{cl1} and Theorem~\ref{th3}, the dominance ratio of the RyOx-algorithm and the RxOy-algorithm is at least $\frac{1}{4}$. By choosing an appropriate starting solution, we can establish improved dominance ratio for these algorithms.

We now discuss an unexpected upper bound on $\cl{A}(Q,c,d)$. As a consequence, we have yet another simple scheme to compute a solution that is not worse than average. Let $\alpha = \sum_{i\in \cl{M}}\sum_{j\in \cl{N}}q_{ij}$, $\beta=\sum_{i\in \cl{M}}c_i$ and $\gamma = \sum_{j\in \cl{N}}d_j$.
\begin{theorem}\label{th4}$\cl{A}(Q,c,d)\leq \max\{\alpha+\beta+\gamma, \beta, \gamma, 0\}$.\end{theorem}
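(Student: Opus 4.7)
The plan is to observe that the four ``corner'' solutions $(\mathbf{0}^m,\mathbf{0}^n)$, $(\mathbf{1}^m,\mathbf{0}^n)$, $(\mathbf{0}^m,\mathbf{1}^n)$, and $(\mathbf{1}^m,\mathbf{1}^n)$ already form a complementary $4$-tuple in the sense of Theorem~\ref{dth1}. I would first compute the four objective function values directly: plugging in, one gets $f(\mathbf{0}^m,\mathbf{0}^n)=0$, $f(\mathbf{1}^m,\mathbf{0}^n)=\beta$, $f(\mathbf{0}^m,\mathbf{1}^n)=\gamma$, and $f(\mathbf{1}^m,\mathbf{1}^n)=\alpha+\beta+\gamma$.

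Next I would invoke identity (\ref{doeq1}) from the proof of Theorem~\ref{dth1} with the specific choice $(x,y)=(\mathbf{0}^m,\mathbf{0}^n)$, which gives
\[
f(\mathbf{0}^m,\mathbf{0}^n)+f(\mathbf{1}^m,\mathbf{0}^n)+f(\mathbf{0}^m,\mathbf{1}^n)+f(\mathbf{1}^m,\mathbf{1}^n)=4\cl{A}(Q,c,d).
\]
Substituting the four values from the previous step yields $0+\beta+\gamma+(\alpha+\beta+\gamma)=\alpha+2\beta+2\gamma=4\cl{A}(Q,c,d)$, which is of course just a rewriting of Theorem~\ref{th1}. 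The point is that $\cl{A}(Q,c,d)$ is exhibited as the arithmetic mean of these four explicit numbers, so it cannot exceed their maximum. The bound $\cl{A}(Q,c,d)\leq\max\{0,\beta,\gamma,\alpha+\beta+\gamma\}$ then follows in a single line.

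There is essentially no obstacle here; the only subtlety is recognizing that the $P$-partition used in Theorem~\ref{dth1} applied to the zero vector pair automatically lists precisely the four trivial solutions whose objective values coincide with $0,\beta,\gamma,\alpha+\beta+\gamma$. Once this is noticed, the proof is immediate and also yields the promised algorithmic corollary: the best of the four trivial solutions is computable in $O(m+n)$ time and is no worse than average, furnishing an alternative ``no worse than average'' scheme with dominance ratio at least $\tfrac{1}{4}$ to complement the RyOx- and RxOy-algorithms.
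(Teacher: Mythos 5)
Your proof is correct, but it reaches the bound by a different mechanism than the paper. The paper extends $f$ to the continuous square by setting $x_i=u$ for all $i$ and $y_j=v$ for all $j$, obtaining the bilinear function $\eta(u,v)=\alpha uv+\beta u+\gamma v$ with $\eta(1/2,1/2)=\cl{A}(Q,c,d)$; it then argues that a bilinear function on $\bl{U}^2$ attains its maximum at one of the four vertices, whose values are exactly $0,\beta,\gamma,\alpha+\beta+\gamma$. You instead observe that these same four corner solutions constitute the single class $P({\bd{0}}^m,{\bd{0}}^n)$ from the proof of Theorem~\ref{dth1}, so identity (\ref{doeq1}) exhibits $\cl{A}(Q,c,d)$ as their arithmetic mean, which trivially cannot exceed their maximum. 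Your route is more elementary and more self-contained given the paper's earlier machinery (it reuses a proved identity rather than appealing to the extreme-point property of bilinear maximization), while the paper's relaxation viewpoint is what naturally motivates the RyOx/RxOy rounding algorithms of that section, since it frames $\cl{A}$ as the value of a fractional point that can be rounded upward. One small slip in your closing remark: the best of the four trivial solutions is computable in $O(mn)$ time, not $O(m+n)$, since evaluating $\alpha=\sum_{i\in\cl{M}}\sum_{j\in\cl{N}}q_{ij}$ requires reading all of $Q$; the $O(1)$ claim in the paper applies only when $\alpha,\beta,\gamma$ are already given.
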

\begin{proof}
Let $u$ and $v$ be real numbers in $[0,1]$. Choose $x\in \bl{B}^m$, $y\in \bl{B}^n$ be such that $x_i=u$ for all $i\in \cl{M}$ and $y_j=v$ for all $j\in \cl{N}$. Then $f(x,y)=\alpha uv+\beta u +\gamma v=\eta(u,v),$ say. Note that $\eta(1/2,1/2)=\cl{A}(Q,c,d)$. Thus, $\max\{\eta(u,v) : (u,v)\in \bl{U}^2\} \geq \cl{A}(Q,c,d)$. Since $\eta(u,v)$ is bilinear, its maximum is attained at an extreme point of the square $\bl{U}^2$. Since these extreme points are precisely $(0,0), (1,0), (0,1), (1,1)$, the result follows.
\end{proof}

\begin{corollary}\label{cr2}
One of the solutions $({\bd{1}}^m,{\bd{1}}^n),({\bd{1}}^m,{\bd{0}}^n),({\bd{0}}^m,{\bd{1}}^n),({\bd{0}}^m,{\bd{0}}^n)$ of BBQP  have an objective function value no worse than $\cl{A}(Q,c,d)$.
\end{corollary}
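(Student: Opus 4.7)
The plan is to observe that the four corner solutions listed are exactly the points at which the bilinear function $\eta(u,v)=\alpha uv+\beta u+\gamma v$ used in the proof of Theorem~\ref{th4} attains its four extreme-point values, so the corollary is an immediate consequence of that theorem together with a direct evaluation of $f$ at those corners.

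First I would compute $f$ at each of the four special $0/1$ vectors: plugging in $x={\bd{1}}^m,y={\bd{1}}^n$ gives $\sum_{i,j}q_{ij}+\sum_i c_i+\sum_j d_j=\alpha+\beta+\gamma$; the quadratic term vanishes as soon as either $x$ or $y$ is zero, so $f({\bd{1}}^m,{\bd{0}}^n)=\beta$, $f({\bd{0}}^m,{\bd{1}}^n)=\gamma$, and $f({\bd{0}}^m,{\bd{0}}^n)=0$. Thus
\[
\max\bigl\{f({\bd{1}}^m,{\bd{1}}^n),f({\bd{1}}^m,{\bd{0}}^n),f({\bd{0}}^m,{\bd{1}}^n),f({\bd{0}}^m,{\bd{0}}^n)\bigr\}=\max\{\alpha+\beta+\gamma,\beta,\gamma,0\}.
\]

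Next I would invoke Theorem~\ref{th4}, which bounds $\cl{A}(Q,c,d)$ above by precisely $\max\{\alpha+\beta+\gamma,\beta,\gamma,0\}$. Combining this with the equality above shows that at least one of the four corner solutions achieves an objective value that is $\geq\cl{A}(Q,c,d)$, which is exactly the statement of the corollary.

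There is no real obstacle here: the corollary is a direct translation of Theorem~\ref{th4} once one notices that the four quantities appearing in that maximum are the objective values at the four $\{0,1\}$--extreme solutions of the bilinear relaxation considered in the proof of Theorem~\ref{th4}. The only thing to be careful about is to state the evaluation of $f$ at each corner cleanly, so that the reader sees why each quadratic contribution disappears whenever $x$ or $y$ is the zero vector.
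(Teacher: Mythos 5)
Your proof is correct and matches the paper's argument: the paper simply notes that the corollary follows directly from Theorem~\ref{th4}, and your evaluation of $f$ at the four corner solutions (yielding exactly $\alpha+\beta+\gamma$, $\beta$, $\gamma$, and $0$) is precisely the spelled-out version of that observation.
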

The proof of this corollary follows directly from Theorem~\ref{th4}. We can compute $\alpha$, $\beta$ and $\gamma$ in $O(mn)$ time and hence we have a solution no worse than average in $O(mn)$ time. Interestingly, if $\alpha, \beta$ and $\gamma$ are given, then we can identify  a solution to BBQP with objective function value no worse than $\cl{A}(Q,c,d)$ in $O(1)$ time. The solution produced by Corollary~\ref{cr2} is trivial and may not be of much practical value. Nevertheless, the simple upper bound on $\cl{A}(Q,c,d)$ established by Theorem~\ref{th4} is very interesting and have interesting consequences as discussed below.

 Recall that the alternating algorithm starts with a solution $(x^0,y^0)$, fix $x^0$ and find the best $y$, say $y^1$. Then fix $y$ at $y^1$ and compute the optimal $x$ and so on. Since we initiate the algorithm by fixing $x$ first, we call this the \emph{$x$-first alternating algorithm}. We can also start the algorithm by fixing $y^0$ first and the resulting variation is called the \emph{$y$-first alternating algorithm}.

 \begin{theorem}\label{xth1}The best solution amongst the solutions produced by the $x$-first alternating algorithm with starting solutions $({\bd{1}}^m,{\bd{1}}^n)$ and ${(\bd{0}}^m,{\bd{0}}^n)$ dominates $2^{m+n-2}+3\left(2^{n-1}\right )$ solutions.
 \end{theorem}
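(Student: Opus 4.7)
The plan is to combine the 4-block partition argument from Theorem~\ref{dth1} with a sharper observation about what the $x$-first alternating algorithm achieves in its very first step when launched from a trivial seed. Specifically, let $w^1$ denote the final iterate of the algorithm started at $(\mathbf{1}^m, \mathbf{1}^n)$. The first step fixes $x = \mathbf{1}^m$ and replaces $y$ by a $y$-value maximizing $f(\mathbf{1}^m, y)$; hence the iterate after step one already dominates every solution in the row $\{(\mathbf{1}^m, y) : y \in \mathbb{B}^n\}$, and since the alternating algorithm is monotone in $f$, so does the final output $w^1$. Analogously, the output $w^0$ from seed $(\mathbf{0}^m, \mathbf{0}^n)$ dominates every solution in the row $\{(\mathbf{0}^m, y) : y \in \mathbb{B}^n\}$. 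Let $w$ be whichever of $w^0, w^1$ has the larger objective value; then $w$ dominates \emph{all} $2 \cdot 2^n = 2^{n+1}$ solutions whose $x$-part lies in $\{\mathbf{0}^m, \mathbf{1}^m\}$.

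Next I would verify that $f(w) \geq \mathcal{A}(Q,c,d)$. Because $w^1$ dominates both $(\mathbf{1}^m,\mathbf{1}^n)$ and $(\mathbf{1}^m,\mathbf{0}^n)$ we have $f(w^1) \geq \max\{\alpha+\beta+\gamma,\beta\}$; because $w^0$ dominates both $(\mathbf{0}^m,\mathbf{1}^n)$ and $(\mathbf{0}^m,\mathbf{0}^n)$ we have $f(w^0) \geq \max\{\gamma,0\}$. Therefore $f(w) \geq \max\{\alpha+\beta+\gamma,\beta,\gamma,0\} \geq \mathcal{A}(Q,c,d)$ by Theorem~\ref{th4}.

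With this in hand, invoke the 4-block partition of $\mathcal{F}$ from the proof of Theorem~\ref{dth1}: the blocks $P(x,y) = \{(x,y),(\mathbf{1}^m-x,y),(x,\mathbf{1}^n-y),(\mathbf{1}^m-x,\mathbf{1}^n-y)\}$ partition $\mathcal{F}$ into $2^{m+n-2}$ sets of size four whose values average to $\mathcal{A}(Q,c,d)$. Since $f(w) \geq \mathcal{A}(Q,c,d)$, $w$ dominates at least one member of every block. The key combinatorial observation is that the blocks lying entirely in the two trivial rows $x \in \{\mathbf{0}^m, \mathbf{1}^m\}$ are precisely those of the form $P(\mathbf{0}^m, y)$, indexed by the unordered pairs $\{y,\mathbf{1}^n-y\}$, so there are exactly $2^{n-1}$ such blocks. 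In each of them $w$ dominates all four members (by the first paragraph), while in each of the remaining $2^{m+n-2}-2^{n-1}$ blocks $w$ dominates at least one.

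Adding up yields
\[
4 \cdot 2^{n-1} \;+\; \bigl(2^{m+n-2}-2^{n-1}\bigr) \;=\; 2^{m+n-2} + 3 \cdot 2^{n-1},
\]
which is the claimed bound. The argument requires no delicate estimate; the only point needing care is the bookkeeping that identifies exactly $2^{n-1}$ blocks as lying inside the trivial rows, so that the $+3\cdot 2^{n-1}$ improvement over Theorem~\ref{dth1} comes from upgrading "one dominated out of four" to "all four dominated" in precisely those blocks.
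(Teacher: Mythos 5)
Your proof is correct and follows essentially the same route as the paper: both rest on the observation that the first iteration from each trivial seed dominates an entire row $\{(\mathbf{1}^m,y)\}$ or $\{(\mathbf{0}^m,y)\}$, use Theorem~\ref{th4} to get $f(w)\geq\cl{A}(Q,c,d)$, and then combine this with the block partition from Theorem~\ref{dth1}. The only difference is bookkeeping — the paper computes $|D_1\cup D_2|$ by inclusion--exclusion where you count per block — and the two tallies agree.
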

 \begin{proof}
 Let $(x^*,y^*)$ be the best solution obtained. When the starting solution is ${(\bd{1}}^m,{\bd{1}}^n)$, $f(x^*,y^*) \geq \max\{f({\bd{1}}^m,{\bd{1}}^n), f({\bd{1}}^m,{\bd{0}}^n)\}$. Likewise, when the starting solution is ${(\bd{0}}^m,{\bd{0}}^n)$, we have $f(x^*,y^*) \geq \max\{f({\bd{0}}^m,{\bd{0}}^n), f({\bd{0}}^m,{\bd{1}}^n)\}$. Thus, by Theorem~\ref{th4} $f(x^*,y^*) \geq \cl{A}(Q,c,d)$ and hence by Theorem~\ref{dth1}, $(x^*,y^*)$ dominates at least $2^{m+n-2}$ solutions. To account for the remaining solutions that are dominated by $(x^*,y^*)$ we proceed as follows.\\

 Construct the set $D_1$ as in the proof of Theorem~\ref{dth1}. Recall that $|D_1|=2^{m+n-2}$. Let $D_2=\{({\bd{1}}^m,y), ({\bd{0}}^m,y) : y\in \bl{B}^n\}$. Now, $|D_2|=2^{n+1}$. For any $y\in \bl{B}^n$, we have $P({\bd{0}}^m,y)=P({\bd{1}}^m,y)$ and $P({\bd{0}}^m,y)=\{({\bd{0}}^m,y), ({\bd{1}}^m,y), ({\bd{0}}^m,{\bd{1}}^n-y), ({\bd{1}}^m,{\bd{1}}^n-y)\}$. Thus, $D_2$ can be partitioned into sets of the form $P({\bd{0}}^m,y^k)$ and there are $\frac{1}{4}2^{n+1}=2^{n-1}$ such sets. Exactly one element from each $P({\bd{0}}^m,y^k)$ is in $D_1$. Thus, $|D_1\cap D_2|=2^{n-1}$ and hence $|D_1\cup D_2|=2^{m+n-2}+2^{n+1}-2^{n-1}=2^{n-1}\left(2^{m-1}+3\right )$.
 \end{proof}

From Theorem~\ref{xth1} the dominance ratio of the $x$-first alternating algorithm is at least $\frac{1}{4}+\frac{3}{2^{m+1}}$ when starting solutions are selected carefully and the algorithm is applied twice. Note that to achieve this dominance ratio, we simply need to perform only one iteration each, when the algorithm starts with $({\bd{1}}^m,{\bd{1}}^n)$ and ${(\bd{0}}^m,{\bd{0}}^n)$. Thus, we can achieve this dominance ratio in polynomial time. A similar result can be derived for $y$-first alternating algorithm and local search algorithm with neighborhood $\bl{N}^\alpha$ for any $\alpha \geq 0$. Further, similar dominance ratio can be achieved by RxOy-algorithm or RyOx-algorithm applied twice, once starting with $({\bd{1}}^m,{\bd{1}}^n)$ and then starting with ${(\bd{0}}^m,{\bd{0}}^n)$ and choosing the best solution.\\

The problem BBQP can be formulated as integer linear programming problem~\cite{p1} as follows:
\begin{align*}
\text{ILP1: } \text{Maximize} & \sum_{i\in \cl{M}}\sum_{j\in \cl{N}}q_{ij}z_{ij}+\sum_{i\in \cl{M}}c_ix_i+\sum_{j\in \cl{N}}d_jy_j \\
\text{Subject to } &z_{ij}-x_i \leq 0, i\in \cl{M}, j\in \cl{N}\\
&z_{ij}-y_j \leq 0, i\in \cl{M}, j\in \cl{N}\\
&z_{ij}-x_i-y_j\geq -1, (i,j)\in S,\\
& x_i \in \{0,1\} \mbox{ for } i \in \cl{M}, y_j \in \{0,1\} \mbox{ for } j\in \cl{N}, z_{ij} \in \{0,1\} \mbox{ for } (i,j) \in \cl{M}\times \cl{N},
\end{align*}
where $S=\{ij : q_{ij} < 0\}$. Let $(z^{\prime},x^{\prime},y^{\prime})$ be an optimal solution to the LP relaxation, where $z^{\prime}$ is an $m\times n$ matrix with $(i,j)$th entry $z^{\prime}_{ij}$, and $x^{\prime}$ is an $m$-vector  with $i$th entry $x^{\prime}_i$ and $y^{\prime}$ is an $n$-vector with $j$th entry $y^{\prime}_j$.  Recall that for $x^{\prime}\in [0,1]^m$ and $y^{\prime}\in [0,1]^n$, $\phi(x^{\prime},y^{\prime})=\sum_{i\in \cl{M}}\sum_{j\in \cl{N}}q_{ij}x^{\prime}_iy^{\prime}_j+\sum_{i\in \cl{M}}c_ix^{\prime}_i+\sum_{j\in \cl{N}}d_jy^{\prime}_j$ and $h(z^{\prime},x^{\prime},y^{\prime})$ is the optimal objective function value of the linear programming relaxation of ILP1.

For any $x\in [0,1]^m$ and $y\in [0,1]^n$, the  solution $(x^1,y^1)$ obtained by the RxOy (RyOx) algorithm satisfies $\phi(x,y)\leq f(x^1,y^1) \leq h(z^{\prime},x^{\prime},y^{\prime})$. This follows from Theorem~\ref{th3} and the property of LP relaxations. Alternative integer programming formulations of BBQP could provide different LP relaxation solutions and hence the resulting RxOy (RyOx) rounding solutions could be different. We give below a new integer programming formulation of BBQP by increasing the number of variables.
\begin{align*}
\text{ILP2: } \text{Maximize} & \sum_{i\in \cl{M}}\sum_{j\in \cl{N}}q_{ij}\left (\frac{1}{4}u_{ij}+v_{ij} -\frac{1}{4}w_{ij}-\frac{1}{4}z_{ij}\right )+\sum_{i\in \cl{M}}c_ix_i+\sum_{j\in \cl{N}}d_jy_j \\
\text{Subject to } &u_{ij}+2v_{ij}=x_i+y_j \mbox{ for } i\in \cl{M}, j\in \cl{N}\\
&u_{ij}+v_{ij}\leq 1, \mbox{ for } i\in \cl{M}, j\in \cl{N}\\
&-z_{ij}+ w_{ij} = x_i-y_j \mbox{ for } i\in \cl{M}, j\in \cl{N}\\
&z_{ij}+w_{ij} \leq 1 \mbox{ for } i\in \cl{M}, j\in \cl{N}\\
& x_i \in \{0,1\} \mbox{ for } i \in \cl{M}, y_j \in \{0,1\} \mbox{ for } j\in \cl{N}\\
& u_{ij},v_{ij},w_{ij},z_{ij} \in \{0,1\} \mbox{ for } (i,j) \in \cl{M}\times \cl{N},
\end{align*}

Experimental analysis of RxOy (RyOx) algorithms starting from the LP relaxations of ILP1 and ILP2 are discussed in the next section.

\section{Computational results}

 Although the primary focus of the paper is on theoretical analysis of approximation algorithms, we have conducted preliminary experimental analysis with the RxOy and RyOx rounding algorithms to examine features that are not clear from theoretical analysis. The alternating algorithm and local search algorithms are thoroughly analyzed from an experimental analysis point of view in~\cite{kp} and~\cite{x} and hence are not considered in our experimental study. The rounding algorithms being simple constructive heuristics, they are not expected to outperform more sophisticated algorithms that employ powerful neighborhoods within a metaheuristic framework. Nonetheless, it is interesting to explore the behavior of these algorithms in the light its theoretical properties and the potential for computing initial solutions for more advanced solution improvement algorithms.

  The data set used in our experiments  are smallsize instances of random problems, biclique instances, matrix factorization instances, and maxcut instances from~\cite{kp}. The algorithms are implemented in C\# and tested on a DELL PC with Windows 7 operating system, Intel i7 processor and 16GB of memory. All CPU times reported are in milliseconds and do not include input-output times. We used the solution obtained by the linear programming relaxation of ILP1 as the fractional vector $(x,y)$ to initiate the rounding process.

 Our first set of experiments were aimed to identify the percentage of problems where LP relaxation produced optimal solutions. This data is important since for such problems, rounding (heuristic algorithms) is irrelevant.  The results of these experiments are summarized in Figure 1. The trend shows that as the problem size increases, the number of problems where the LP relaxation produced optimal solution decreases. This decrease is more rapid in some class of problems compared to random instances.

\begin{figure}
\centering
\begin{minipage}{.45\textwidth}
  \centering
\begin{tikzpicture}
	\begin{axis}[
		width=\textwidth,
		height=5cm,
		legend pos=outer north east,
		xlabel={$mn$},
		ylabel={Opt., \%},
		cycle list={
			{black,only marks},
		},
	]
	\addplot+ coordinates {
		(16, 96)
		(20, 94)
		(25, 92)
		(24, 91)
		(30, 91)
		(36, 87)
		(28, 88)
		(35, 90)
		(42, 87)
		(49, 79)
		(32, 91)
		(40, 81)
		(48, 79)
		(56, 72)
		(64, 59)
		(36, 85)
		(45, 83)
		(54, 70)
		(63, 70)
		(72, 55)
		(81, 43)
		(40, 89)
		(50, 81)
		(60, 80)
		(70, 61)
		(80, 45)
		(90, 45)
		(100, 36)
		(44, 83)
		(55, 77)
		(66, 64)
		(77, 57)
		(88, 45)
		(99, 39)
		(110, 39)
		(121, 22)
		(48, 82)
		(60, 78)
		(72, 67)
		(84, 60)
		(96, 39)
		(108, 33)
		(120, 25)
		(132, 22)
		(144, 20)
		(52, 81)
		(65, 67)
		(78, 60)
		(91, 50)
		(104, 46)
		(117, 32)
		(130, 23)
		(143, 12)
		(156, 15)
		(169, 17)
		(56, 80)
		(70, 70)
		(84, 57)
		(98, 44)
		(112, 33)
		(126, 27)
		(140, 26)
		(154, 15)
		(168, 13)
		(182, 6)
		(196, 4)
		(60, 81)
		(75, 63)
		(90, 50)
		(105, 40)
		(120, 34)
		(135, 18)
		(150, 18)
		(165, 13)
		(180, 8)
		(195, 3)
		(210, 5)
		(225, 2)
	};
	\end{axis}
\end{tikzpicture}
\caption{Random}
\end{minipage}%
\begin{minipage}{.1\textwidth}
\end{minipage}
\begin{minipage}{.45\textwidth}
  \centering
\begin{tikzpicture}
	\begin{axis}[
		width=\textwidth,
		height=5cm,
		xlabel={$mn$},
		ylabel={Opt., \%},
		cycle list={
			{black,only marks},
		},
	]
	\addplot+ coordinates {
		(16, 74)
		(20, 66)
		(25, 53)
		(24, 55)
		(30, 50)
		(36, 39)
		(28, 47)
		(35, 40)
		(42, 35)
		(49, 31)
		(32, 42)
		(40, 33)
		(48, 22)
		(56, 14)
		(64, 12)
		(36, 53)
		(45, 28)
		(54, 23)
		(63, 13)
		(72, 10)
		(81, 6)
		(40, 50)
		(50, 28)
		(60, 17)
		(70, 13)
		(80, 3)
		(90, 2)
		(100, 2)
		(44, 45)
		(55, 22)
		(66, 17)
		(77, 11)
		(88, 3)
		(99, 3)
		(110, 4)
		(121, 2)
		(48, 42)
		(60, 19)
		(72, 10)
		(84, 5)
		(96, 3)
		(108, 1)
		(120, 2)
		(132, 1)
		(144, 0)
		(52, 46)
		(65, 22)
		(78, 8)
		(91, 4)
		(104, 1)
		(117, 2)
		(130, 1)
		(143, 1)
		(156, 1)
		(169, 0)
		(56, 34)
		(70, 18)
		(84, 5)
		(98, 2)
		(112, 2)
		(126, 2)
		(140, 1)
		(154, 0)
		(168, 0)
		(182, 0)
		(196, 0)
		(60, 39)
		(75, 14)
		(90, 8)
		(105, 2)
		(120, 2)
		(135, 1)
		(150, 0)
		(165, 0)
		(180, 0)
		(195, 0)
		(210, 0)
		(225, 0)
	};
	\end{axis}
\end{tikzpicture}
\caption{Matrix Factorization}
\end{minipage}
\end{figure}

Let us now discuss the experimental results with the RxOy and RyOx algorithms. The details are summarized in Tables 1 to 5. In each table, the column ``best'' reports the best known objective function value of the problem. Most of these values are optimal. The column ``LP obj'' contains the upper bound obtained by the LP relaxation. The columns ``xy'' and ``yx'' respectively contains the objective function value of the solution produced by the RxOy algorithm and the RyOx algorithms.  It may be noted that the solution produced by these algorithms are close to the best known solution values and the running time is very minimal. This makes these algorithms good candidates to  be used to generate starting solutions for more sophisticated algorithms. The table also provide insight into the value $\cl{A}(Q,c,d)$ in comparison to the best known objective function value. The column ``avg'' reports $\cl{A}(Q,c,d)$ and the column ``Avg+'' reports the value of the upper bound on $\cl{A}(Q,c,d)$ provided by Theorem~\ref{th4}. Note that Avg+ is the objective function value of the solution reported in Corollary~\ref{cr2}. These are certainly inferior solutions. The column ``$x_iy_j$'' represents $f(x,y)$ for the LP relaxation solution $(x,y)$. The experimental results conclusively demonstrate the power of RxOy and RyOx algorithms as a very fast stand alone heuristics or  as an algorithm for generating starting solutions in more complex algorithms.

\begin{table}[ht] \centering
\begin{tabular}{@{} l @{} c @{} r r r r r r r @{} c @{} r r r @{} c @{} r @{}}
\toprule
&&\multicolumn{7}{c}{Objective}&&\multicolumn{3}{c}{Time, ms}&&\\
\cmidrule(){3-9}
\cmidrule(){11-13}
Instance&\hspace*{2em}&Best&LP obj.&$x_i y_j$&yx&xy&Avg+&Avg&\hspace*{2em}&LP&yx&xy\\
\midrule
$20 \times 50$&&13555&21179&437.5&12715&12208&2160&624.0&&151&171&146\\
$25 \times 50$&&13207&25405&759.0&12226&11016&2911&505.0&&69&77&87\\
$30 \times 50$&&15854&29713&634.0&13992&14824&3219&778.0&&102&105&102\\
$35 \times 50$&&14136&33471&-2146.0&12554&12099&835&-2028.5&&137&143&131\\
$40 \times 50$&&18778&39444&42.8&16562&15573&624&125.8&&168&183&170\\
$45 \times 50$&&22057&44760&-401.0&17688&19660&610&-440.0&&212&236&200\\
$50 \times 50$&&23801&50390&1576.0&21645&22178&5115&1576.0&&234&272&233\\
\midrule
Average&&17341&34909&128.9&15340&15365&2211&162.9&&154&169&153\\
\bottomrule
\end{tabular}
\caption{Random}
\label{}
\end{table}

\begin{table}[ht] \centering
\begin{tabular}{@{} l @{} c @{} r r r r r r r @{} c @{} r r r @{} c @{} r @{}}
\toprule
&&\multicolumn{7}{c}{Objective}&&\multicolumn{3}{c}{Time, ms}&&\\
\cmidrule(){3-9}
\cmidrule(){11-13}
Instance&\hspace*{2em}&Best&LP obj.&$x_i y_j$&yx&xy&Avg+&Avg&\hspace*{2em}&LP&yx&xy\\
\midrule
$20 \times 50$&&18341&29999&-1911363.0&4933&0&0&-1911363.0&&47&49&47\\
$25 \times 50$&&24937&38513&-3265058.5&2764&0&0&-3265058.5&&66&90&65\\
$30 \times 50$&&27887&51726&-2981174.8&9050&15138&0&-2981174.8&&97&95&95\\
$35 \times 50$&&32515&57302&-3782013.0&3270&10502&0&-3782013.0&&135&134&134\\
$40 \times 50$&&33027&61966&-4562391.5&0&0&0&-4562391.5&&133&131&133\\
$45 \times 50$&&37774&67923&-5789225.5&4420&0&0&-5789225.5&&140&141&140\\
$50 \times 50$&&30124&78745&-5004969.5&5517&5833&0&-5004969.5&&200&195&200\\
\midrule
Average&&29229&55168&-3899456.5&4279&4496&0&-3899456.5&&117&119&116&&0\\
\bottomrule
\end{tabular}
\caption{Max Biclique}
\label{}
\end{table}

\begin{table}[ht] \centering
\begin{tabular}{@{} l @{} c @{} r r r r r r r @{} c @{} r r r @{} c @{} r @{}}
\toprule
&&\multicolumn{7}{c}{Objective}&&\multicolumn{3}{c}{Time, ms}&&\\
\cmidrule(){3-9}
\cmidrule(){11-13}
Instance&\hspace*{2em}&Best&LP obj.&$x_i y_j$&yx&xy&Avg+&Avg&\hspace*{2em}&LP&yx&xy\\
\midrule
$20 \times 50$&&6983&10704&-315.3&5223&5481&0&-393.3&&110&109&111\\
$25 \times 50$&&8275&13866&-678.5&7630&7350&0&-678.5&&169&166&164\\
$30 \times 50$&&10227&18958&-140.0&8378&8685&0&-140.0&&232&234&228\\
$35 \times 50$&&11897&20590&-543.8&11071&9008&0&-543.8&&283&283&277\\
$40 \times 50$&&14459&23110&892.3&13216&13433&3569&892.3&&506&524&498\\
$45 \times 50$&&13247&24393&-984.5&12496&9513&0&-984.5&&430&431&428\\
$50 \times 50$&&15900&28875&356.3&14075&15357&1425&356.3&&628&622&621\\
\midrule
Average&&11570&20071&-201.9&10298&9832&713&-213.1&&337&338&333\\
\bottomrule
\end{tabular}
\caption{Max Induced Subgraph}
\label{}
\end{table}

\begin{table}[ht] \centering
\begin{tabular}{@{} l @{} c @{} r r r r r r r @{} c @{} r r r @{} c @{} r @{}}
\toprule
&&\multicolumn{7}{c}{Objective}&&\multicolumn{3}{c}{Time, ms}&&\\
\cmidrule(){3-9}
\cmidrule(){11-13}
Instance&\hspace*{2em}&Best&LP obj.&$x_i y_j$&yx&xy&Avg+&Avg&\hspace*{2em}&LP&yx&xy\\
\midrule
$20 \times 50$&&9008&21266&-791.0&3838&4928&0&-780.0&&93&83&89\\
$25 \times 50$&&10180&27546&-1205.0&4062&6410&0&-1352.0&&123&130&120\\
$30 \times 50$&&13592&37676&-1172.0&5808&7560&0&-274.0&&205&184&183\\
$35 \times 50$&&14024&40900&-1196.0&5816&8578&0&-1105.0&&331&267&269\\
$40 \times 50$&&17610&45948&1133.0&9424&12082&3568&1784.0&&334&290&297\\
$45 \times 50$&&15252&48492&-1209.0&6204&9790&0&-1956.0&&441&402&402\\
$50 \times 50$&&19580&57408&723.0&12332&10364&1446&723.0&&591&538&536\\
\midrule
Average&&14178&39891&-531.0&6783&8530&716&-422.9&&303&271&271\\
\bottomrule
\end{tabular}
\caption{MaxCut}
\label{}
\end{table}

\begin{table}[ht] \centering
\begin{tabular}{@{} l @{} c @{} r r r r r r r @{} c @{} r r r @{} c @{} r @{}}
\toprule
&&\multicolumn{7}{c}{Objective}&&\multicolumn{3}{c}{Time, ms}&&\\
\cmidrule(){3-9}
\cmidrule(){11-13}
Instance&\hspace*{2em}&Best&LP obj.&$x_i y_j$&yx&xy&Avg+&Avg&\hspace*{2em}&LP&yx&xy\\
\midrule
$20 \times 50$&&114&249&-1.5&106&94&0&-1.5&&37&32&31\\
$25 \times 50$&&127&314&1.5&109&96&6&1.5&&45&40&42\\
$30 \times 50$&&148&373&-2.5&124&140&0&-2.5&&65&61&61\\
$35 \times 50$&&139&425&-13.0&106&122&0&-13.0&&80&77&78\\
$40 \times 50$&&210&512&12.0&182&194&48&12.0&&148&73&71\\
$45 \times 50$&&191&559&-3.5&165&179&0&-3.5&&100&93&94\\
$50 \times 50$&&217&637&11.5&181&191&46&11.5&&120&119&119\\
\midrule
Average&&164&438&0.6&139&145&14&0.6&&85&71&71\\
\bottomrule
\end{tabular}
\caption{Matrix Factorization}
\label{}
\end{table}
\clearpage
We have also conducted experiments with RxOy and RyOx rounding algorithms using the LP relaxation solution of ILP2. For random instances, the LP relaxation solution of ILP1 and ILP2 were different but for all other test instances, they produced same solutions most of the time. In some random instances considered, the rounding algorithms produced better solutions when started ILP2 LP relaxation solution in comparison to the solutions produced from ILP1 LP relaxation. However, in general we did not see significant performance difference and hence preferred ILP1 formulation, considering its smaller size.\\

Note that the RxOy and RyOx rounding algorithms can be initiated using any $x^0\in \bl{U}^m$ and $y^0\in \bl{U}^n$ and not necessarily a fractional solution for an LP relaxation.
For $i=1,2,\ldots ,m$ let $\gamma_i =c_i+ \sum_{j\in \cl{N}}q_{ij}$ and for $j=1,2,\ldots ,n$ let $\delta_j=d_j+\sum_{i\in \cl{M}}q_{ij}$. Also, $ran$[$a,b$] represents a uniformly distributed random number in the interval [$a,b$]. Using these values we generate random vectors $(x^0,y^0)$ as follows:

\noindent{\bf Type 1 random vectors:} Choose
\begin{center}
\begin{tabular}{ccc}
 $ x^0_i = \begin{cases}
\text{ran($0,0.5$]} & \text{if } \gamma_i < 0, \\
\text{ran($0.5,1$]} & \text{otherwise,}
\end{cases}
$
& and &
$ y^0_j = \begin{cases}
\text{ran($0,0.5$]} & \text{if } \delta_j  < 0, \\
\text{ran($0.5,1$]} & \text{otherwise.}
\end{cases}$
\end{tabular}
\end{center}

\noindent{\bf Type 2 random vectors:} These are generated using weighted random numbers with weight proportional to the corresponding $\gamma_i$ or $\delta_j$ values. Let  $\beta_1 = \max\{|\gamma_i| : i\in M\}$ and $\beta_2 = \max\{|\delta_j| : j\in \cl{N}\}$. Now choose
\begin{center}
\begin{tabular}{ccc}
 $ x^0_i = \begin{cases}
\text{$0.5-\dfrac{|\gamma_i|}{\beta_1}\text{ran}$($0,0.5$]} & \text{if } \gamma_i < 0, \\
\text{$\dfrac{\gamma_i}{\beta_1}\text{ran}$($0.5,1$]} & \text{otherwise,}
\end{cases}
$
& and &
$ y^0_j = \begin{cases}
\text{$0.5-\dfrac{|\delta_j|}{\beta_2}ran$($0,0.5$]} & \text{if } \delta_j  < 0, \\
\text{$\dfrac{\delta_j}{\beta_2}ran$($0.5,1$]} & \text{otherwise.}
\end{cases}$
\end{tabular}
\end{center}

The RxOy and RyOx algorithms using the above choices of starting solution $(x^0,y^0)$ can also be used to construct starting solutions for advanced algorithms. The built-in randomness generates good solutions that can be embedded in metaheristics with multiple starts. Systematic experimental analysis of such sophisticated algorithms is beyond the scope of this paper.

\section{Conclusion}

In this paper we studied approximation algorithms for BBQP which is a generalization of the well known BQP. Various approximation algorithms are analyzed using averaged value based measures and domination analysis.   It is demonstrated that very powerful local search algorithms could get trapped at poor quality local minimum even if we allow exponential time in searching a very large scale neighborhood. Some of the proof techniques used are simple yet elegant and could be of use in domination analysis of heuristics for other related problems. Experimental results with two construction algorithms are also given. A natural question for further investigation is to close the gap between non-approximability bounds and lower bounds on domination ratio. Since BBQP is not as well studied as BQP, there are many other avenues for further investigation and we are currently investigating further properties of BBQP.

\end{document}